\documentclass{article}
\usepackage{amsmath}
\usepackage{amssymb}

\setcounter{MaxMatrixCols}{10}

\newtheorem{theorem}{Theorem}
\newtheorem{acknowledgement}[theorem]{Acknowledgement}

\newtheorem{conjecture}[theorem]{Conjecture}

\newtheorem{remark}[theorem]{Remark}

\newenvironment{proof}[1][Proof]{\noindent\textbf{#1.} }{\ \rule{0.5em}{0.5em}}
\input{tcilatex}

\begin{document}

\title{The strong renewal theorem with infinite mean via local large
deviations.}
\author{R. A. Doney \and University of Manchester}
\date{}
\maketitle

\begin{abstract}
A necessary and sufficient condition is established for an asymptotically
stable renewal process to satisfy the strong renewal theorem. This result is
valid for all $\alpha \in (0,1),$ thus completing a result for $\alpha \in
(1/2,1)$ which was proved in the 1963 paper of Garsia and Lamperti \cite{GL}%
. This paper is superseded by arXiv:1612.07635.
\end{abstract}

\section{Introduction}

This paper contains new results about asymptotically stable random walk in
two different, but related areas. The first result, which applies to any
random walk $S=\{S_{n},n\geq 0\}$ which is in the domain of attraction of a
stable law of index $\alpha \in (0,1)\cup (1,2)$ without centering, is a
local large deviation bound which improves the error term in Gnedenko's
local limit theorem without making any further assumptions. This bound may
have other uses, but here we use it to give a complete answer to a question
which has remained open since the 1963 paper of Garsia and Lamperti \cite{GL}%
; viz which renewal processes in the domain of attraction of a stable law of
index $\alpha \in (0,1)$ verify the strong renewal theorem (SRT). We also
give an answer to the same question for certain generalized renewal
functions, and indicate how the renewal process proof can be modified to get
the corresponding result for asymptotically stable subordinators. In the
main part of the paper, dealing with renewal processs, we will restrict
attention to the case of an aperiodic distribution on the integer lattice,
but it is easy to see that the non-lattice case can be treated by the same
techniques. This paper is superseded by arXiv:1612.07635.

\section{Results}

We write $S_{0}=0,S_{n}=\sum_{1}^{n}X_{r},$ the $X^{\prime }s$ being i.i.d.
with mass function $p$ and distribution function $F.$ We also put $P(X>x)=%
\overline{F}(x),$ and $X_{n}^{\ast }=\max_{1\leq r\leq n}|X_{r}|.$

\begin{theorem}
\label{LOCAL}Suppose $S_{n}/a_{n}$ converges in distribution to a stable $%
(\alpha ,\rho )$ law, where $\alpha \in (0,1)\cup (1,2)$ and the positivity
parameter $\rho $ is positive. Then, given any $\gamma >0,$ $\exists
C_{0},n_{0},\theta _{0}$ such that, for all $n\geq n_{0}$ and $x\geq n\theta
_{0},$%
\begin{equation}
P\{S_{n}=x,X_{n}^{\ast }\leq \gamma x\}\leq \frac{C_{0}\{n\overline{F}%
(x)\}^{1/\gamma }}{a_{n}}.  \label{l1}
\end{equation}
\end{theorem}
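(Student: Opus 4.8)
The plan is to reduce the statement to a local large deviation bound for a random walk whose jumps have been truncated from above, and to prove that bound by a "peeling" argument on the jumps near the truncation level together with an independent--block splitting that supplies the local factor $1/a_{n}$. First I would dispose of the trivial range. Fix a large constant $K$; when $x\le Ka_{n}$ --- a range which is empty for $\alpha\in(1,2)$ once $\theta_{0}$ is large (then $a_{n}=o(n)$) and which for $\alpha\in(0,1)$ is the subrange $n\theta_{0}\le x\le Ka_{n}$ --- regular variation gives $n\overline{F}(x)\ge n\overline{F}(Ka_{n})\ge c>0$, so $\{n\overline{F}(x)\}^{1/\gamma}$ is bounded below and (\ref{l1}) follows at once from Gnedenko's local limit theorem in the form $\sup_{y}a_{n}P(S_{n}=y)\le C$. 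So I may assume $x/a_{n}\to\infty$, hence $n\overline{F}(x)\to0$ and, after enlarging $n_{0},\theta_{0}$, both $Cn\overline{F}(x)<1$ and $\delta x\gg a_{n}$ for the small constant $\delta=\delta(\gamma)>0$ fixed below --- two facts on which everything rests.

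Call an index $r$ \emph{big} if $X_{r}\in(\delta x,\gamma x]$. On the event in (\ref{l1}) all jumps lie in $[-\gamma x,\gamma x]$, so if exactly $j$ indices are big then the other $n-j$ jumps lie in $[-\gamma x,\delta x]$ and must sum to $x-\sum_{\text{big }r}X_{r}\ge(1-j\gamma)x$. Splitting on $j$ and on which indices are big, then conditioning on the big jumps (each of probability $\le\overline{F}(\delta x)\le C\overline{F}(x)$ by regular variation), gives
\[
P\{S_{n}=x,\,X_{n}^{\ast}\le\gamma x\}\ \le\ \sum_{j\ge0}\binom{n}{j}\overline{F}(\delta x)^{j}\ \sup_{w\ge(1-j\gamma)_{+}x}P(W_{n-j}=w),
\]
where $W_{m}$ is the sum of $m$ i.i.d.\ copies of $X$ conditioned to lie in $[-\gamma x,\delta x]$ (the conditioning event has probability tending to $1$, so it costs only a bounded factor). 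Terms with $j$ of order $n$ are super--polynomially small and harmless once $n_{0}$ is large, so the point is the regime $m=n-j\asymp n$.

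The key estimate is that, for $m\asymp n$ and any $z\ge0$, $\sup_{w\ge z}P(W_{m}=w)\le C\{Cn\overline{F}(x)\}^{cz/(\delta x)}/a_{n}$ for a fixed $c>0$. To obtain it, split the $m$ jumps into halves $R_{1},R_{2}$ of size $\asymp n$; by independence, for any $w\ge z$ one half must reach level $z/2$, whence
\[
\sup_{w\ge z}P(W_{m}=w)\ \le\ P(W_{R_{1}}\ge z/2)\,\sup_{v}P(W_{R_{2}}=v)\ +\ P(W_{R_{2}}\ge z/2)\,\sup_{v}P(W_{R_{1}}=v).
\]
Each concentration factor is $O(1/a_{n})$: $W_{R_{i}}$ is a conditioned walk, so $P(W_{R_{i}}=v)\le P(S_{|R_{i}|}=v)/P(\text{conditioning})^{|R_{i}|}\le 2P(S_{|R_{i}|}=v)$, and the local limit theorem for $S$ applies. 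For the tail factor I would \emph{not} use exponential moments --- the right tail being heavy, $\{W_{R_{1}}\ge z/2\}$ is governed by having several jumps near the cap $\delta x$, not by a Cram\'er mechanism. Instead I peel: with $K_{1}$ the number of jumps exceeding $\tfrac12\delta x$, one has $W_{R_{1}}\le K_{1}\delta x+\Sigma$, where $\Sigma$ is the sum of the jumps not exceeding $\tfrac12\delta x$, so either $K_{1}\ge z/(4\delta x)$ --- of probability at most $\binom{|R_{1}|}{\lceil z/(4\delta x)\rceil}\overline{F}(\tfrac12\delta x)^{\lceil z/(4\delta x)\rceil}\le\{Cn\overline{F}(x)\}^{z/(4\delta x)}$ by regular variation --- or $\Sigma\ge z/4$, to which the same step applies with the cap halved; iterating down to the stable scale $a_{n}$ (where one stops), and absorbing the bounded or slowly growing factors that accumulate, yields the asserted bound $\{Cn\overline{F}(x)\}^{cz/(\delta x)}$.

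Granted the key estimate, the $j$--th term above is at most $(C/a_{n})\{Cn\overline{F}(x)\}^{e_{j}}/j!$ with $e_{j}=j+c(1-j\gamma)_{+}/\delta$. Choosing $\delta$ small enough that $c/\delta\ge1/\gamma$ gives $e_{j}\ge j+(1-j\gamma)_{+}/\gamma$, which equals $1/\gamma$ when $j\gamma\le1$ and equals $j\ge1/\gamma$ otherwise; hence $e_{j}\ge1/\gamma$ for every $j$, and since $Cn\overline{F}(x)<1$ we get $\{Cn\overline{F}(x)\}^{e_{j}}\le\{Cn\overline{F}(x)\}^{1/\gamma}$. Summing over $j\ge0$ and absorbing $\sum_{j}1/j!$ and the constants into $C_{0}$ gives (\ref{l1}). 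The main obstacle is the key estimate, and within it the heavy--tailed tail bound for the truncated walk: making the peeling argument precise and uniform in $x$ --- in particular the bookkeeping of the halving caps, the slowly varying parts of $\overline{F}$, and the stable scale at which one stops --- is where the real work lies. The preliminary reduction to $x/a_{n}\to\infty$, i.e.\ keeping the truncation level $\delta x$ above the norming sequence, is precisely what makes both the concentration bound and the tail bound uniform; without it the decomposition does not close.
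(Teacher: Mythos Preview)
Your overall framework --- splitting off the jumps in $(\delta x,\gamma x]$, using a half--half decomposition to extract the concentration factor $1/a_{n}$, and then reducing everything to a tail bound for the walk with jumps capped at $\delta x$ --- is sound and is a genuinely different route from the paper, which instead uses an exponential tilt (Cram\'er's change of measure) with parameter $\mu=\log\Lambda/(\gamma x)$ together with a Berry--Esseen type local estimate for the tilted walk. Both approaches need essentially the same key ingredient: a bound of the form $P(W_{m}\ge z)\le\{Cn\overline{F}(x)\}^{cz/(\delta x)}$ for the truncated walk with $m\asymp n$.

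The gap is in your proof of this tail bound. Your iterated halving keeps the ratio $\text{target}/\text{cap}$ fixed, so at step $i$ the threshold $z/(4\delta x)$ for $K_{i}$ is constant while the tail level $\delta x/2^{i}$ shrinks. Hence
\[
P\bigl(K_{i}\ge z/(4\delta x)\bigr)\ \lesssim\ \bigl(n\,\overline{F}(\delta x/2^{i})\bigr)^{z/(4\delta x)}\ \asymp\ \bigl(Cn\overline{F}(x)\cdot 2^{i\alpha}\bigr)^{z/(4\delta x)},
\]
which \emph{grows} with $i$; by the time $2^{i}\asymp\delta x/a_{n}$ (your stopping scale) one has $n\overline{F}(\delta x/2^{i})\asymp 1$ and the term is $O(1)$, not small. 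The terminal piece is no better: with cap $\asymp a_{n}$ and target $\asymp z\,a_{n}/(\delta x)$, the truncated walk at the stable scale fluctuates on the order of $a_{n}$ (for $\alpha<1$ the truncated mean is $\asymp a_{n}\overline{F}(a_{n})\asymp a_{n}/n$; for $\alpha\in(1,2)$ the truncated variance is $\asymp a_{n}^{2}/n$), so $P(\Sigma_{L}\ge z/2^{L+1})$ is again of order $1$. The union bound you obtain is therefore $O(1)$, not $\{Cn\overline{F}(x)\}^{cz/(\delta x)}$.

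This is not a bookkeeping issue but a structural one. After truncation at $\delta x$ the jumps are bounded and the moment generating function is finite; reaching level $z$ really \emph{is} a Cram\'er event --- the optimal strategy is roughly $z/(\delta x)$ jumps near the cap, each of probability $\asymp\overline{F}(x)$, which is precisely what tilting with $\mu\asymp(\delta x)^{-1}\log\Lambda$ captures. Your heuristic that ``several jumps near the cap'' governs is correct, but the peeling you describe does not isolate those jumps; it dilutes the event over all scales down to $a_{n}$, where it is no longer rare. To make your framework close you would still need a Chernoff--type input (for instance the Fuk--Nagaev bound for truncated sums, itself proved via the moment generating function) in place of the peeling; once that is granted, the two proofs coincide at the decisive step.
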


and

\begin{equation}
P\{S_{n}=x\}\leq C_{0}\frac{n\overline{F}(x)}{a_{n}}.  \label{l2}
\end{equation}

\begin{remark}
A result similar to this for the case $\alpha =2,EX^{2}<\infty $ can be
found in Lemma 4 of \cite{nag}.
\end{remark}

\begin{remark}
Gnedenko's local limit theorem implies that $a_{n}P\{S_{n}=x\}\rightarrow 0$
whenever $\theta :=x/a_{n}\rightarrow \infty ,$ but gives no information
about the rate. This is given by (\ref{l2}), since we have%
\begin{equation*}
n\overline{F}(x)\backsim c\frac{\overline{F}(x)}{\overline{F}(a_{n})}=c\frac{%
\overline{F}(a_{n}\theta )}{\overline{F}(a_{n})},
\end{equation*}%
so that if we write $\Lambda =(n\overline{F}(x))^{-1}$ it follows from
Potter's bounds that for any $\varepsilon >0$ we have%
\begin{equation}
c\theta ^{\alpha -\varepsilon }\leq \Lambda \leq c\theta ^{\alpha
+\varepsilon }  \label{ly}
\end{equation}%
for all sufficiently large $n$ and $\theta .$ (Here, and in what follows, $c$
denotes a generic positive constant whose value can change from line to
line.)
\end{remark}

Our main application of this is to prove the second part of the following,
in which we write $g(x)=\sum_{0}^{\infty }P(S_{n}=x)$ for the renewal mass
function.

\begin{theorem}
\label{srt}(i) Assume that $F$ is aperiodic, $S\in D(\alpha ,\rho )$ with $%
\alpha \in (1/2,1)$ and $\rho >0.$ Then 
\begin{equation}
\lim_{x\rightarrow \infty }x\overline{F}(x)g(x)=g(\alpha ,\rho ):=\alpha
E(Y^{-\alpha }:Y>0),  \label{r2}
\end{equation}%
where $Y$ denotes a random variable having the limiting stable law.

(ii) Assume that $F$ is aperiodic, $P(X\geq 0)=1,$ and $S\in D(\alpha ,1)$
with $\alpha \in (0,1/2]$. Then (\ref{r2}) holds with $\rho =1$ if and only
if%
\begin{equation}
\lim_{x\rightarrow \infty }x\overline{F}(x)p(x)=0,  \label{rz}
\end{equation}%
and 
\begin{equation}
\lim_{\delta \rightarrow 0}\lim \sup_{x\rightarrow \infty }x\overline{F}%
(x)\sum_{1}^{\delta x}\frac{p(x-w)}{w\overline{F}(w)^{2}}=0.  \label{r3}
\end{equation}
\end{theorem}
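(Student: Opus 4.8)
The plan is to prove part (ii), since part (i) is the Garsia–Lamperti result that is now covered by the range $\alpha\in(1/2,1)$ in part (i) as stated; the new content is the equivalence for $\alpha\in(0,1/2]$. The natural strategy is to split the renewal mass function $g(x)=\sum_{n\ge 0}P(S_n=x)$ at a level $n$ comparable to the ``typical'' index $1/\overline F(x)$, or more precisely to decompose according to whether the contribution comes from a single big jump landing near $x$ or from the bulk. Concretely I would write, for a small $\delta>0$ and using $P(X\ge 0)=1$,
\begin{equation*}
x\overline F(x)g(x)=x\overline F(x)\sum_{n\ge 0}P(S_n=x)
\end{equation*}
and separate $P(S_n=x)$ into the event $\{X_n^\ast\le \delta x\}$ and its complement. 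On $\{X_n^\ast>\delta x\}$ exactly one $X_r$ exceeds $\delta x$ (for $x$ large the probability of two such is negligible after summing in $n$), so this part is, up to $o(1)$, $\sum_n n\sum_{w}P(S_{n-1}=x-w,\,X_{n-1}^\ast\le\delta x)p(w)$ with $w>\delta x$; resumming the $n$-dependent factor against the local limit theorem for $S_{n-1}/a_{n-1}$ produces the integral $\alpha E(Y^{-\alpha}:Y>0)$ in the limit $x\to\infty$ then $\delta\to0$, which is the desired constant $g(\alpha,\rho)$. This is the ``good'' term and reproduces the Garsia–Lamperti heuristic.

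The error analysis is where Theorem~\ref{LOCAL} does its work. The contribution of $\{X_n^\ast\le\delta x\}$ must be shown to vanish after multiplying by $x\overline F(x)$ and letting $\delta\to0$; by $(\ref{l1})$ with $\gamma=\delta$ this bulk term is at most $C_0\sum_n\{n\overline F(x)\}^{1/\delta}/a_n$, and since $n\overline F(x)\le 1$ in the relevant range while $\sum_n a_n^{-1}$ over $n\lesssim 1/\overline F(x)$ is $\asymp x\overline F(x)^{-1}\cdot\overline F(x)=\dots$ — one checks using $(\ref{ly})$ and regular variation that choosing $\delta$ small makes the exponent $1/\delta$ large enough to kill the sum. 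For the single-big-jump term I would further split the jump size $w$: for $w\in(\delta x, x-\delta x)$ one gets a Riemann sum converging to the $Y$-integral; the boundary pieces $w$ near $0$ (i.e. $x-w$ small, handled by $(\ref{rz})$) and $w$ near $x$ (i.e. $S_{n-1}=x-w$ small with $w\sim x$) are exactly the two pathological regimes that $(\ref{rz})$ and $(\ref{r3})$ are designed to control. Specifically, the piece with $x-w=v$ small contributes $\asymp x\overline F(x)\sum_{v\le\delta x}g(v)p(x-v)$, and since $g(v)\asymp 1/(v\overline F(v))$ in the $\alpha<1/2$ regime (by a crude two-sided renewal bound), this is comparable to the sum in $(\ref{r3})$ up to constants; to get the sharp statement I would instead keep $g$ itself and use a bootstrap/regularization argument, or invoke the known rough asymptotics of $g$.

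The necessity direction requires showing that if either $(\ref{rz})$ or $(\ref{r3})$ fails then $x\overline F(x)g(x)$ cannot converge to $g(\alpha,\rho)$. For $(\ref{rz})$ this is transparent: $g(x)\ge P(S_1=x)=p(x)$, so $x\overline F(x)g(x)\ge x\overline F(x)p(x)$, hence $(\ref{rz})$ is forced. For $(\ref{r3})$ the idea is to bound $g$ from below by the two-jump term, $g(x)\ge \sum_{n}\sum_w P(S_{n-2}=x-w-v)\cdots$, isolating configurations ``small bulk block of size $v$, then one jump of size about $x-w-v$, i.e.\ essentially $g(v)p(x-v-\text{small})$ summed'', which reproduces $\sum_{v\le\delta x}p(x-v)/(v\overline F(v)^2)$ as a genuine lower bound on $x\overline F(x)g(x)-g(\alpha,\rho)+o(1)$; if the $\limsup$ in $(\ref{r3})$ is positive the renewal mass is too big along a subsequence. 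The main obstacle I anticipate is making the single-big-jump decomposition rigorous uniformly in $n$ — controlling the overlap between ``bulk is small'' and ``jump is small'' and showing the doubly-big-jump term is truly negligible after the $n$-sum — and, relatedly, justifying the interchange of $\lim_{x\to\infty}$, $\sum_n$, and $\lim_{\delta\to0}$; this is precisely the step for which the local large deviation estimate $(\ref{l1})$ (rather than Gnedenko's bare local limit theorem) is indispensable, because it gives a summable-in-$n$ bound on the bulk part with room to spare.
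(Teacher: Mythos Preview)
Your decomposition has a genuine gap at the step ``on $\{X_n^\ast>\delta x\}$ exactly one $X_r$ exceeds $\delta x$ (for $x$ large the probability of two such is negligible after summing in $n$)''. This is false once $\alpha\le 1/3$. A crude bound on the at-least-two-big-jumps contribution for $n\le \delta A(x)$ gives at most $c\,\overline F(\delta x)^2\sum_{n_0}^{\delta A(x)} n^2/a_n$; since $n^2/a_n\in RV(2-\eta)$ with $\eta=1/\alpha$, for $\alpha<1/3$ this sum is bounded by a constant $C(n_0)$, and after multiplying by $x/A(x)$ you are left with $c\,x/A(x)^3$, which \emph{diverges}. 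More conceptually, when $\alpha\in(1/(k+2),1/(k+1)]$ the dominant way for $S_n$ (with $n\ll A(x)$) to reach $x$ is via $k$ large jumps, not one, so a single-big-jump peel cannot close the argument. Relatedly, your claim that the range $w\in(\delta x,x-\delta x)$ produces ``a Riemann sum converging to the $Y$-integral'' is not right: for $y=x-w$ in this middle range the local limit theorem does not apply to $P(S_{n-1}=y)$ when $n\ll A(y)$, and it is precisely the further big jumps hidden inside $S_{n-1}$ that carry the mass there.

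The paper proceeds differently on both points. First, the constant $g(\alpha,\rho)$ is extracted not from a big-jump term but directly from the range $n>\delta A(x)$ via Gnedenko's local limit theorem, reducing the whole problem to showing that $\sum_{n_0<n\le\delta A(x)}P(S_n=x)=o(A(x)/x)$. Second, for this ``small $n$'' sum the paper decomposes according to the \emph{largest} step $Z_n^{(1)}$ into three ranges ($\le\gamma x$; in $(\gamma x,x-Ca_n)$; in $[x-Ca_n,x]$), and for $\alpha\in(1/(k+2),1/(k+1)]$ iterates this on the second range, peeling off $Z_n^{(1)},\dots,Z_n^{(k)}$ and applying (\ref{l2}) at the last step; each peel gains a factor $n/A(\cdot)$ and after $k$ of them the residual sum has index $k+2-\eta>0$, which is what makes it vanish as $\delta\to0$. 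Your outline for necessity via a lower bound is in the right spirit, but again the paper does it more sharply: it takes the event $\{Z_n^{(1)}\in[x-Ca_n,x-\lambda a_n]\}$, uses the two-sided local limit theorem on $P(S_{n-1}=y)$ for $y\in[\lambda a_n,Ca_n]$, and sums in $n$ to recover $\tilde I(\delta,x)$ as a genuine lower bound.
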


\begin{remark}
The condition (\ref{rz}) is easily seen to be equivalent to%
\begin{equation}
\lim_{x\rightarrow \infty }x\overline{F}(x)\sum_{1}^{n_{0}}P(S_{n}=x)=0\text{
for any fixed }n_{0},  \label{r1}
\end{equation}%
and we will use this repeatedly.
\end{remark}

\begin{remark}
The statement (\ref{r2}) is called the SRT and is the obvious analogue of
the Renewal Theorem when the mean is infinite. The question as to which
asymptotically stable renewal processs satisfy it has been extensively
studied since the \ pioneering paper of Garsia and Lamperti \cite{GL}, who
first established (i) above. They only considered the "lattice renewal
case", i.e. they assumed $P(X\in \boldsymbol{Z}^{+})=1,$ but their results
were extended to the case of a general random walk in \cite{E} and \cite{W}.
In the case $\alpha \leq 1/2$ it is easy to show that (\ref{r1}) is
necessary for the SRT to hold, and the papers \cite{W}, \cite{D}, \cite{V}, 
\cite{C1} and \cite{C2} contain a succession of sufficient conditions for
the SRT to hold, based on restictions on the asymptotic behaviour of the
ratio $xp(x)/\overline{F}(x),$ and its non-lattice counterpart.
\end{remark}

\begin{remark}
When $\alpha \in (1/2,1)$ the fact that for fixed \thinspace $n$ we have $%
P(S_{n}=x)\leq P(S_{n}>x-1)\backsim n\overline{F}(x)$ shows that (\ref{rz})
holds and the fact that $1/(x\overline{F}(x)^{2})$ is asymptotically
increasing shows that%
\begin{equation*}
\lim_{x\rightarrow \infty }x\overline{F}(x)\sum_{1}^{\delta x}\frac{p(x-w)}{w%
\overline{F}(w)^{2}}\leq \lim_{x\rightarrow \infty }\frac{cx\overline{F}(x)}{%
\delta x\overline{F}(\delta x)^{2}}\cdot \left( \overline{F}((1-\delta )x)-%
\overline{F}(x)\right) =\delta ^{2(1-\alpha )},
\end{equation*}%
so (\ref{r2}) also holds. So (ii) is technically also correct for $\alpha
\in (1/2,1).$
\end{remark}

\begin{remark}
In the case $\alpha =1/2,$ it is easy to check that if we put $\overline{F}%
(x)=(xL(x))^{-1/2}$ then both (\ref{rz}) and (\ref{r3}) hold if $%
\lim_{x\rightarrow \infty }L(x)>0,$ so only in case $\lim_{x\rightarrow
\infty }L(x)=0$ is the NASC required. Thus the case $\overline{F}(x)\backsim
(cx)^{-1/2}$ represents the boundary of the situation where the SRT holds
without further conditions.
\end{remark}

\section{Proof of Theorem \protect\ref{LOCAL}}

\begin{proof}
If we write $P\{S_{n}=x\}=P_{1}+P_{2},$ where $P_{2}=P\{S_{n}=x,X_{n}^{\ast
}>\gamma x\},$ the simple estimate%
\begin{equation*}
P_{2}\leq n\sum_{|z|>\gamma x}p(z)P(S_{n-1}=x-z)\leq \frac{cn\{\overline{F}%
(\gamma x)+F(-\gamma x)\}}{a_{n}}\leq \frac{cn\overline{F}(x)}{a_{n}},
\end{equation*}%
which is a consequence of Gnedenko's local limit theorem, shows that (\ref%
{l2}) follows from (\ref{l1}), which we now prove.

We introduce an associated distribution $\tilde{P},$ by setting 
\begin{equation}
\tilde{p}(z)=\tilde{P}(X_{1}=z)=e^{\mu z}p(z)\boldsymbol{1}_{(|z|\leq \gamma
x)}/m_{0},  \label{l3}
\end{equation}%
where $m_{0}=\sum_{|z|\leq \gamma x}e^{\mu z}p(z)$ and we set $\mu :=\frac{%
\log \Lambda }{\gamma x}.$ Note that $\Lambda \rightarrow \infty $ as $%
n,\theta \rightarrow \infty ,$ and, since $e^{\gamma \mu x}=\Lambda ,$
iteration of (\ref{l3}) gives 
\begin{equation}
P_{1}=m_{0}^{n}\Lambda ^{-\frac{1}{\gamma }}\tilde{P}(S_{n}=x).  \label{l4}
\end{equation}%
We start by showing that $m_{0}^{n}\leq c$, for all sufficiently large $n$
and $\theta .$ When $\alpha \in (0,1),$ we write%
\begin{equation*}
1-m_{0}=\sum_{|z|<1/\mu }(1-e^{\mu z})p(z)+P(|X|\geq 1/\mu )-\sum_{1/\mu
\leq |z|\leq \gamma x}e^{\mu z}p(z),
\end{equation*}%
so that%
\begin{eqnarray*}
|1-m_{0}| &\leq &c\mu \sum_{|z|<1/\mu }|z|p(z)+P(|X|\geq 1/\mu )+\sum_{1/\mu
\leq |z|\leq \gamma x}|z|e^{\mu |z|-\log |z|}p(z) \\
&\leq &c\overline{F}(1/\mu )+\frac{e^{\gamma \mu x}}{\gamma x}\sum_{|z|\leq
\gamma x}|z|p(z),
\end{eqnarray*}%
where we have used the observation that $\mu z-\log z$ is monotone
increasing on $[1/\mu ,\infty ),$ and standard properties of regularly
varying functions. The second term above is bounded by%
\begin{equation}
\frac{ce^{\gamma \mu x}x\overline{F}(x)}{x}=c\Lambda \overline{F}(x)=\frac{c%
}{n},  \label{l6}
\end{equation}%
so that, again by Potter's bounds, for any $\tilde{\alpha}\in (0,\alpha )$%
\begin{eqnarray}
|1-m_{0}| &\leq &\frac{c}{n}(1+n\overline{F}(1/\mu ))  \notag \\
&\leq &\frac{c}{n}(1+\frac{\overline{F}(1/\mu )}{\overline{F}(a_{n})})\leq 
\frac{c}{n}(1+(a_{n}\mu )^{-\tilde{\alpha}})\leq \frac{c}{n},  \label{l5}
\end{eqnarray}%
the last step relying on the fact, which follows from (\ref{ly}), that 
\begin{equation*}
\frac{1}{a_{n}\mu }=\frac{\gamma \theta }{\log \Lambda }\rightarrow \infty .
\end{equation*}%
When $\alpha \in (1,2)$ we have $EX_{1}=0,$ so that%
\begin{eqnarray*}
\sum_{|z|<1/\mu }(1-e^{\mu z})p(z) &=&\sum_{!z|<1/\mu }(1-e^{\mu z}+\mu
z)p(z)+\mu \sum_{|z|\geq 1/\mu }zp(z) \\
&\leq &c\{\sum_{-|z|<1/\mu }\mu ^{2}z^{2}p(z)+F(-1/\mu )+\overline{F}(1/\mu
)\} \\
&\leq &c\overline{F}(1/\mu ),
\end{eqnarray*}%
and we can also write%
\begin{eqnarray*}
\sum_{1/\mu \leq |z|\leq \gamma x}e^{\mu |z|}p(z) &=&\sum_{1/\mu \leq
|z|\leq 2/\mu }e^{\mu |z|}p(z)+\sum_{2/\mu \leq |z|\leq \gamma
x}|z|^{2}e^{\mu |z|-2\log |z|}p(z) \\
&\leq &c\{P(|X|>1/\mu )+\frac{e^{\mu \gamma x}}{(\gamma x)^{2}}%
E(X^{2}:|X|\leq \gamma x)\} \\
&\leq &c\{P(|X|>1/\mu )+\Lambda \overline{F}(x)\}
\end{eqnarray*}%
so again (\ref{l5}) holds. Thus we have $m_{o}^{n}\leq c$ for all
sufficiently large $n$ and $\theta ,$ which gives%
\begin{equation*}
P_{1}\leq c\Lambda ^{-\frac{1}{\gamma }}\tilde{P}(S_{n}=x),
\end{equation*}%
and we are left to prove that $a_{n}\tilde{P}(S_{n}=x)\leq c.$ We do this by
applying a suitable Normal approximation, for which we need to estimate 
\begin{equation*}
m_{k}:=\tilde{E}X_{1}^{k}=\frac{1}{m_{0}}\sum_{|z|\leq \gamma x}z^{k}e^{\mu
z}p(z):=\frac{\tilde{m}_{k}}{m_{0}},
\end{equation*}%
for $k=1,2,3.$ Since $m_{0}\geq \sum_{0\leq z\leq \gamma x}p(z)\rightarrow
\rho >0,$ and (\ref{l6}) gives $m_{0}\leq c,$ it suffices to estimate $%
\tilde{m}_{k};$ first, when $\alpha <1$ we have%
\begin{eqnarray}
|\tilde{m}_{k}| &\leq &e^{\mu \gamma x}E\{|X|^{k}:0\leq |X|\leq \gamma x\}
\label{l7} \\
&\backsim &c\Lambda x^{k}\overline{F}(x)=cx^{k}/n\text{ for }k=1,2,3.  \notag
\end{eqnarray}%
When $\alpha \in (1,2)$ this is also valid for $k=2,3,$ but for $k=1$ a
little more work is required. Specifically we write%
\begin{eqnarray*}
|\sum_{|z|<1/\mu }e^{\mu z}zp(z)| &\leq &\sum_{|z|<1/\mu }|z(e^{\mu
z}-1)|p(z)+|\sum_{|z|<1/\mu }zp(z)| \\
&\leq &c\sum_{|z|<1/\mu }\mu z^{2}p(z)+|\sum_{|z|\geq 1/\mu }zp(z)|\leq c%
\overline{F}(1/\mu )/\mu \leq \frac{x}{n\log \Lambda },
\end{eqnarray*}%
and 
\begin{eqnarray*}
|\sum_{1/\mu \leq |z|<\gamma x}e^{\mu z}zp(z)| &\leq &\sum_{1/\mu \leq
|z|<\gamma x}e^{\mu |z|-\log |z|}z^{2}p(z) \\
\leq \frac{e^{\mu \gamma x}}{\gamma x}E(X^{2}:|X| &\leq &\gamma x)\leq
c\Lambda (x)x\overline{F}(x)=cx/n,
\end{eqnarray*}%
so (\ref{l7}) also holds in this case. We also need a lower bound for $%
\tilde{\sigma}^{2}=\tilde{E}(X-\tilde{m}_{1})^{2}$. Since, for all $x,n$
large enough and for any $d\in (0,1),$ $(y-\tilde{m}_{1})^{2}\geq cy^{2}$
for $y\in (x(1-d)\leq y\leq x)$ we \ get, 
\begin{eqnarray*}
\tilde{\sigma}^{2} &\geq &e^{\mu \gamma x(1-d)}E\{(X-\tilde{m}%
_{1})^{2}:\gamma x(1-d)\leq X\leq \gamma x\} \\
&\geq &c\Lambda ^{1-d}x^{2}\overline{F}(x)=cx^{2}/n\Lambda ^{d}
\end{eqnarray*}%
so that%
\begin{equation}
cx^{2}\geq n\tilde{\sigma}^{2}\geq \frac{cx^{2}}{\Lambda ^{d}}.  \label{l8}
\end{equation}%
If $\tilde{\nu}:=\tilde{E}|X-\tilde{m}_{1}|^{3},$ a similar calculation
leads to the bounds%
\begin{equation}
cx^{3}\geq n\tilde{\nu}\geq \frac{cx^{3}}{\Lambda ^{d}}.  \label{l9}
\end{equation}%
With these results in hand, we can apply Lemma 3 of \cite{D} to deduce that $%
P_{1}$ is bounded above by%
\begin{equation*}
c\left( \frac{1}{\sqrt{n\tilde{\sigma}^{2}}}+\frac{n\tilde{\nu}}{(n\tilde{%
\sigma}^{2})^{2}}+\int_{\tilde{\sigma}^{2}/4\tilde{\nu}}^{\pi }e^{-n(1-|\phi
(t)|)}dt\right) ,
\end{equation*}%
where $\phi (t)=\tilde{E}(e^{itX}).$ By (\ref{l8}) and (\ref{l9}), the first
two terms are bounded above by $cx^{-1}\Lambda ^{2d},$ and this in turn is
bounded asymptotically, for suitably chosen $d,$ by $c/a_{n}.$ We also have
the bound 
\begin{eqnarray*}
1-|\phi (t)| &\geq &1-\mathcal{R}\phi (t)\geq cE(e^{\mu X}(1-\cos
tX):|X|\leq \gamma x) \\
&\geq &ct^{2}E(X^{2}:0\leq X\leq \gamma x\wedge t^{-1})) \\
&\geq &c\overline{F}(t^{-1})\text{ for all }t\geq 1/\gamma x.
\end{eqnarray*}%
For all $n$ such that $a_{n}\leq \gamma x$ we can therefore bound the
integral term by%
\begin{eqnarray*}
&&\int_{\tilde{\sigma}^{2}/4\tilde{\nu}}^{1/a_{n}}dt+\int_{1/a_{n}}^{\pi
}e^{-cn\overline{F}(t^{-1})}dt \\
&\leq &\frac{1}{a_{n}}(1+\int_{1}^{a_{n}\pi }e^{-cn\overline{F}%
(a_{n}z^{-1})}dz).
\end{eqnarray*}%
But by Potter's bounds we have 
\begin{equation*}
n\overline{F}(a_{n}z^{-1})=\frac{\overline{F}(az^{-1})}{\overline{F}(a_{n})}%
\geq c(za_{n})^{\alpha _{1}}
\end{equation*}%
for any $\alpha _{1}\in (0,\alpha ),$ and we deduce that $a_{n}\tilde{P}%
(S_{n}=x)$ is bounded above, and (\ref{l1}) follows.
\end{proof}

\section{Proof of Theorem \protect\ref{srt}}

\begin{proof}
We will introduce a quantity $A(x)=x^{\alpha }L_{0}(x)$ where $L_{0}(x)$ is
a normalised slowly varying function, (see \cite{BGT}, p15) which satisfies $%
A(x)\backsim 1/\overline{F}(x)$ Then $A$ is differentiable and $xA^{\prime
}(x)/A(x)\rightarrow \alpha .$ We can and will take the norming sequence $%
a_{n}$ to be the value at $x=n$ of $a(x),$where $A(a(x))=1,x\geq 1.$

Our first step is to establish that (\ref{r2}) holds iff for some fixed $%
n_{0}$ 
\begin{equation}
\lim_{\delta \rightarrow 0}\lim \sup_{x\rightarrow \infty }\frac{x}{A(x)}%
\sum_{n\in (n_{0},\delta A(x)]}P(S_{n}=x)=0.  \label{r5}
\end{equation}%
To see this, fix any $\delta >0,$ note that $n>\delta A(x)\iff x<a(n/\delta
)\backsim $ $\delta ^{-\eta }a(n),$ so given $\varepsilon >0$ Gnedenko's
local limit theorem allows us to choose $x_{0}(\varepsilon )$ large enough
to ensure that, for all $x\geq x_{0}(\varepsilon ),$ on this range we have 
\begin{equation*}
a_{n}|P(S_{n}=x)-f(x/a_{n})|\leq \varepsilon
\end{equation*}%
where $f$ is the density of $Y$. Then%
\begin{equation*}
\left\vert \sum_{n>\delta A(x)}P(S_{n}=x)-\sum_{n>\delta A(x)}\frac{%
f(x/a_{n})}{a_{n}}\right\vert \leq \varepsilon \sum_{n>\delta A(x)}\frac{1}{%
a_{n}}\leq \varepsilon c\frac{\delta A(x)}{\delta _{x}^{\ast }x},
\end{equation*}%
where $a(\delta A(x))=\delta _{x}^{\ast }x,$ so that $\delta _{x}^{\ast
}\backsim \delta ^{\eta }$ as $x\rightarrow \infty .$ Putting $x/a_{n}=y$ so
that $n=A(x/y),$ we see that the second sum on the left is a Riemann
approximation to%
\begin{eqnarray*}
&&\int_{0}^{x/a(\delta A(x))}\frac{f(y)A^{\prime }(x/y)ydy}{x^{2}}dy\backsim 
\frac{\alpha }{x}\int_{0}^{1/\delta _{x}^{\ast }}f(y)A(x/y)dy \\
&=&\frac{\alpha A(x)}{x}\int_{0}^{1/\delta _{x}^{\ast }}\frac{A(x/y)}{A(x)}%
f(y)dy\backsim \frac{\alpha A(x)}{x}\int_{0}^{\delta ^{-\eta }}y^{-\alpha
}f(y)dy,
\end{eqnarray*}%
where in the last step we used Potter's bounds and dominated convergence.
Since we can choose $\delta $ as small as we like and $\varepsilon $ is
arbitrary, we see that%
\begin{equation}
\lim_{\delta \rightarrow 0}\lim_{x\rightarrow \infty }\frac{x}{A(x)}%
\sum_{n>\delta A(x)}P(S_{n}=x)=\alpha E(Y^{-\alpha }:Y>0)=g(\alpha ,\rho ),
\label{r11}
\end{equation}%
and we are left to ascertain when (\ref{r5}) and (\ref{r1}) are valid.

When $\alpha \in (1/2,1);$we have already noted that (\ref{r1}) holds. Also
in this case we have $n^{2}/a_{n}$ $\in RV(2-\eta ),$ i.e. is regularly
varying with index $2-\eta >0,$ so from (\ref{l2})%
\begin{eqnarray*}
\frac{x}{A(x)}\sum_{n_{0}}^{\delta A(x)}P(S_{n} &=&x)\leq \frac{C_{0}x}{A(x)}%
\sum_{n_{0}}^{\delta A(x)}\frac{n\overline{F}(\text{ }x)}{a_{n}} \\
&\leq &\frac{cx}{A(x)^{2}}\cdot \frac{\delta ^{2}A(x)^{2}}{a(\delta A(x))}=%
\frac{c\delta ^{2}}{\delta _{x}^{\ast }}.
\end{eqnarray*}%
Since $\delta _{x}^{\ast }\backsim \delta ^{\eta }$ as $x\rightarrow \infty $
with $1<\eta <2,$ (\ref{r5}) follows, and we have proved (i).

So from now on we take $\alpha \in (0,1/2]$ and consider only the renewal
case, i.e. we assume $P(X\geq 0)=1.$ Since (\ref{r1}) is obviously
necessary, from now on we will assume that it holds, and show that (\ref{r5}%
) holds iff (\ref{r3}) holds.

Clearly (\ref{r3}) is equivalent to 
\begin{eqnarray}
\lim_{\delta \rightarrow 0}\lim \sup_{x\rightarrow \infty }\frac{x\tilde{I}%
(\delta ,x)}{A(x)} &=&0,\text{ where}  \label{r15} \\
\tilde{I}(\delta ,x) &=&\sum_{1}^{\delta x}p(x-w)\frac{A(w)^{2}}{w}.
\label{r16}
\end{eqnarray}%
We also note that if $B$ denotes any non-negative asymptotically increasing
function then an immediate consequence of (\ref{r15}) is that%
\begin{equation}
\lim_{\delta \rightarrow 0}\lim \sup_{x\rightarrow \infty }\frac{x}{B(\delta
x)A(x)}\sum_{1}^{\delta x}p(x-w)\frac{B(w)A(w)^{2}}{w}=0.  \label{r17}
\end{equation}%
In connection with this we will often use the fact that for any fixed $%
\delta _{0}>0,$%
\begin{eqnarray*}
&&\lim \sup_{x\rightarrow \infty }\frac{x}{B(x)A(x)}\sum_{\delta
_{0}x<w<(1-\delta _{0})x}p(x-w)\frac{B(w)A(w)^{2}}{w} \\
&\leq &\lim \sup_{x\rightarrow \infty }\frac{x}{B(x)A(x)}\frac{B(x)A(x)^{2}}{%
\delta _{0}x}\overline{F}(\delta _{0}x)<\infty ,
\end{eqnarray*}%
and combining this with (\ref{r17}) we see that, whenever (\ref{r15}) holds,%
\begin{equation}
\lim \sup_{x\rightarrow \infty }\frac{x}{B(x)A(x)}\sum_{0<w<(1-\delta
_{0})x}p(x-w)\frac{B(w)A(w)^{2}}{w}<\infty .  \label{r18}
\end{equation}%
We start by showing the necessity of (\ref{r3}). If $f(n,x)$ is any
non-negative function we will write "$f(n,x)$ is asymptotically neglible" to
mean that for some fixed $n_{0}$%
\begin{equation*}
\lim_{\delta \downarrow 0}\lim \sup_{x\rightarrow \infty }\frac{x}{A(x)}%
\sum_{n_{0}}^{\delta A(x)}f(n,x)=0.
\end{equation*}%
We can and will assume henceforth that $n_{0}$ and $\delta $ are chosen so
that when $x$ is large enough the bounds (\ref{l1}) and (\ref{l2}) in
Theorem \ref{LOCAL} are operative. First, we consider possible values of $%
Z_{n}^{(1)}:=\max_{1\leq r\leq n}X_{r},$ and for any fixed $0<\lambda <C$ we
get%
\begin{eqnarray*}
P^{\ast } &:&=P(S_{n}=x,x-Ca_{n}\leq Z_{n}^{(1)}\leq x-\lambda
a_{n}])=\sum_{\lambda a_{n}}^{Ca_{n}}P(S_{n}=x,Z_{n}^{(1)}=x-y) \\
&=&n\sum_{\lambda a_{n}}^{Ca_{n}}P(S_{n-1}=y,Z_{n-1}^{(1)}<x-y,X_{n}=x-y) \\
&=&n\sum_{\lambda a_{n}}^{Ca_{n}}p(x-y)P(S_{n-1}=y,Z_{n-1}^{(1)}<x-y).
\end{eqnarray*}%
By Gnedenko's Local Limit Theorem we see that $a_{n}P(S_{n-1}=y)$ is bounded
above and below by positive constants for $y\in \lbrack \lambda
a_{n},Ca_{n}] $ when $n$ and $\theta =y/a_{n}$ are sufficiently large. We
deduce the bound%
\begin{eqnarray*}
P(S_{n-1} &=&y,Z_{n-1}^{(1)}\geq x-y)\leq n\sum_{x-y}^{\infty
}p(z)P(S_{n-2}=y-z) \\
&\leq &\frac{cn\overline{F}(x-y)}{a_{n}}.
\end{eqnarray*}%
Since $n\overline{F}(x-y)$ can be made small for all $y\leq Ca_{n}$ by
making $\theta $ large, we see that 
\begin{equation*}
P^{\ast }\geq \frac{cn}{a_{n}}\sum_{\lambda a_{n}}^{Ca_{n}}p(x-z).
\end{equation*}%
Thus when $\alpha <1/2$ so that $n/a_{n}\in RV(1-\eta )$ with $1-\eta <-1,$ $%
\sum_{n\in (n_{0},\delta A(x)]}P^{\ast }$ is bounded below by a multiple of 
\begin{eqnarray*}
\sum_{n\in (n_{0},\delta A(x)]}\frac{n}{a_{n}}\sum_{\lambda
a_{n}}^{Ca_{n}}p(x-z) &=&\sum_{Ca(n_{0})}^{C\delta _{x}^{\ast
}x}p(x-z)\sum_{A(z/C)}^{\delta A(x)\wedge A(z/\lambda )}\frac{n}{a_{n}} \\
&\geq &\sum_{Ca(n_{0})}^{\lambda \delta _{x}^{\ast
}x}p(x-z)\sum_{A(z/C)}^{A(z/\lambda )}\frac{n}{a_{n}} \\
&\geq &c\sum_{Ca(n_{0})}^{\lambda \delta _{x}^{\ast }x}p(x-z)\left( \frac{%
A(z/C)^{2}}{z/C}-\frac{A(z/\lambda )^{2}}{z/\lambda }\right) \\
&=&c\tilde{I}(\lambda \delta _{x}^{\ast },x)-c\sum_{1}^{Ca(n_{0})}p(x-z)%
\frac{A(z)^{2}}{z}.
\end{eqnarray*}%
Since it follows from (\ref{r1}) that the final term is asymptotically
neglible, we see that (\ref{r15}) is necessary for the SRT to hold when $%
\alpha <1/2$. In the case $\alpha =1/2$ we write $A(x)=\sqrt{xL(x)}$ and
note that 
\begin{eqnarray*}
\sum_{A(z/C)}^{A(z/\lambda )}\frac{n}{a_{n}} &\backsim
&c\int_{z/C}^{z/\lambda }\frac{A(y)A^{\prime }(y)}{y}dy \\
&\backsim &c\int_{z/C}^{z/\lambda }\frac{A(y)^{2}}{y^{2}}dy=c\int_{z/C}^{z/%
\lambda }\frac{L(y)}{y}dy \\
&=&c\int_{1/C}^{1/\lambda }\frac{L(zw)}{w}dw\backsim cL(z)=c\frac{A(z)^{2}}{z%
}.
\end{eqnarray*}%
Thus (\ref{r15}) is a necessary condition in all cases. To show that it is
also sufficient, we write%
\begin{eqnarray*}
P(S_{n} &=&x)=\sum_{1}^{3}P_{r\text{ }}^{(1)},\text{ where }P_{1\text{ }%
}^{(1)}=P(S_{n}=x,Z_{n}^{(1)}\leq \gamma x), \\
P_{2\text{ }}^{(1)} &=&P(S_{n}=x,Z_{n}^{(1)}\in (\gamma x,x-Ca_{n})),\text{
and } \\
P_{3\text{ }}^{(1)} &=&P(S_{n}=x,Z_{n}^{(1)}\in \lbrack x-Ca_{n},x]).
\end{eqnarray*}%
Note first that there is an upper bound for $P^{\ast }$ corresponding to the
lower bound we established earlier, so the sufficiency for $P_{3}^{(1)}$ to
be asymptotically neglible will follow if we can, given arbitrary $%
\varepsilon >0,$ find a $\lambda >0$ such that 
\begin{equation}
\sum_{n\in (n_{0},\delta A(x)]}\sum_{0}^{\lambda
a_{n}}np(x-z)P(S_{n-1}=z)\leq c\tilde{I}(C\delta _{x}^{\ast },x)+o(A(x)/x).
\label{r20}
\end{equation}%
To see this, we use the following facts, which are contained in Lemma 4 and
the argument leading to (3.16) in \cite{D}: $\exists n_{0},\lambda >0$ such
that%
\begin{eqnarray*}
\text{for }z &\geq &n\geq n_{0}\text{ and }z/a_{n}\leq \lambda \text{ we
have }zP(S_{n-1}=z)\leq ce^{-c(n/A(z))}; \\
\text{for }z &\leq &n\text{ we have }P(S_{n-1}=z)\leq ce^{-cn}.
\end{eqnarray*}%
Splitting the LHS of (\ref{r20}) in the obvious way we we see that it is
bounded above by a multiple of%
\begin{eqnarray*}
&&\sum_{n\in (n_{0},\delta A(x)]}\sum_{0}^{n}ne^{-cn}p(x-z)+\sum_{n\in
(n_{0},\delta A(x)]}\sum_{n}^{\lambda a_{n}}\frac{n}{z}e^{-c(n/A(z))}p(x-z)
\\
&\leq &\sum_{n\in (n_{0},\delta A(x)]}n^{2}e^{-cn}\sup_{1\leq z\leq
n}p(x-z)+\sum_{z=n_{0}}^{\lambda \delta _{x}^{\ast }x}\sum_{n=A(z/\lambda
)}^{z}\frac{n}{z}e^{-c(n/A(z))}p(x-z).
\end{eqnarray*}%
The first term here is $o(A(x)/x)$ by condition (\ref{rz}). Writing $n=A(z)y$
in the second term we see that%
\begin{equation*}
\sum_{n=A(z/\lambda )}^{z}\frac{n}{z}e^{-c(n/A(z))}\backsim \frac{A(z)^{2}}{z%
}\int_{\lambda ^{-\delta \ast }}^{z/A(z)}e^{-cy}dy\leq c\frac{A(z)^{2}}{z},
\end{equation*}%
and then (\ref{r20}) follows.

Next, we choose $\gamma \in (0,\alpha /(1-\alpha ))$ so that by (\ref{l1})
we have $P_{1}^{(1)}\leq \frac{c\{n\overline{F}(x)\}^{1/\gamma }}{a_{n}}$
and since $\frac{n^{1+1/\gamma }}{a_{n}}\in RV(1+1/\gamma -\eta )$ with $%
1+1/\gamma -\eta >0$ it follows that%
\begin{eqnarray*}
\sum_{n\in (n_{0},\delta A(x)]}P_{1}^{(1)} &\leq &c\overline{F}(x)^{1/\gamma
}\sum_{n\in (n_{0},\delta A(x)]}\frac{n^{1/\gamma }}{a_{n}} \\
&\leq &c\overline{F}(x)^{1/\gamma }\frac{(\delta A(x))^{1+1/\gamma }}{\delta
_{x}^{\ast }x}\leq c\frac{\delta ^{1+1/\gamma }A(x)}{\delta _{x}^{\ast }x}.
\end{eqnarray*}%
Recalling that $\delta _{x}^{\ast }\backsim \delta ^{\eta }$ as $%
x\rightarrow \infty ,$ we see that $P_{1}^{(1)}$ is also asymptotically
neglible. This leaves us only to deal with $P_{2}^{(1)}$ and this is more
complicated.

First assume that $\alpha \in (1/3,1/2],$ i.e, $\eta \in \lbrack 2,3),$ so
that $\sum_{1}^{m}\frac{n^{2}}{a_{n}}\backsim c\frac{m^{3}}{a_{m}}$ and we
can assume, WLOG, that $z^{-1}A(z)^{3}$ is increasing$.$ By the same
argument used to get the upper bound for $P_{3}^{(1)},$ but now using the
bound (\ref{l2}), we get that for any $\delta _{0}\in (C\delta _{x}^{\ast
},1-\gamma )$%
\begin{eqnarray}
&&\sum_{n\in (n_{0},\delta A(x)]}P_{2}^{(1)}  \notag \\
&\leq &c\sum_{n\in (n_{0},\delta A(x)]}\frac{n^{2}}{a_{n}}%
\sum_{Ca_{n}}^{(1-\gamma )x}p(x-z)\overline{F}(z)  \notag \\
&=&c\sum_{Ca_{n_{0}}}^{(1-\gamma )x}p(x-z)\overline{F}(z)\sum_{n_{0}}^{%
\delta A(x)\wedge A(z/C)}\frac{n^{2}}{a_{n}}  \notag \\
&\leq &c\sum_{Ca_{n_{0}}}^{(1-\gamma )x}p(x-z)\overline{F}(z)\frac{A(\delta
_{x}^{\ast }x\wedge z/C)^{3}}{\delta _{x}^{\ast }x\wedge z/C}  \notag \\
&\leq &c\sum_{Ca_{n_{0}}}^{\delta _{0}x}p(x-z)\overline{F}(z)\frac{A(z)^{3}}{%
z}+\frac{\delta ^{3}A(x)^{3}}{\delta _{x}^{\ast }x}\sum_{\delta
_{0}x}^{(1-\gamma )x}p(x-z)\overline{F}(z).  \label{r10}
\end{eqnarray}%
Now, given arbitrary $\varepsilon >0,$ we fix $\delta _{0}$ so that $\lim
\sup_{x\rightarrow \infty }\frac{x\tilde{I}(\delta _{0},x)}{A(x)}\leq
\varepsilon .$ Then the second term in (\ref{r10}) is bounded above by%
\begin{equation*}
\frac{\delta ^{3}A(x)^{3}}{\delta _{x}^{\ast }x}\bullet \overline{F}(\gamma
x)\overline{F}(\delta _{0}x)\backsim \frac{\delta ^{3-\eta }A(x)}{(\gamma
\delta _{0})^{\alpha }x}\text{ as }x\rightarrow \infty ,
\end{equation*}%
and it follows that $P_{2}^{(1)}$ is asymptotically neglible. This proves
the theorem for $\alpha \in (1/3,1/2],$ so now we consider other values of $%
\alpha .$

We let $Z_{n}^{(i)},i=1,2,\cdots ,n$ denote the steps $X_{r},r=1,2,\cdots ,n$
arranged in decreasing order, and put $Y_{k}=\sum_{1}^{k}Z_{n}^{(i)}$ for $%
k\geq 1,$ and $Y_{0}=0,$ where we suppress the dependence on $n$. Fix $%
C_{1}>C_{2}>\cdots C_{k}>0.$ Then, if $P_{2}^{(k)}=P(S_{n}=x,B_{k})$ with%
\begin{equation*}
B_{k}=\wedge _{1}^{k}(Z_{n}^{(r)}\in (\gamma
(x-Y_{r-1}),x-Y_{r-1}-C_{r}a_{n}]),
\end{equation*}%
we claim first that $P_{2}^{(k)}$ is asymptotically neglible for all $%
1/(k+2)<\alpha \leq 1/(k+1),$ where $k\geq 2.$ We have%
\begin{eqnarray*}
P_{2}^{(k)} &=&\sum P(Z_{n}^{(1)}=z_{1},\cdots Z_{n}^{(k)}=z_{k},S_{n}=x) \\
&\leq &cn^{k}\sum p(z_{1})\cdots p(z_{k})P(S_{n}=x-(z_{1}+\cdots z_{k})),
\end{eqnarray*}%
where the summation runs over%
\begin{eqnarray*}
z_{1} &\geq &z_{2}\geq \ldots \geq z_{k\text{ }}\geq 0\text{ such that, with 
}z_{0}=0, \\
\text{ }z_{r} &\in &(\gamma (x-(z_{0}+\cdots z_{r-1}),x-(z_{0}+\cdots
z_{r-1})-C_{r}a_{n}])\text{ for }r=1,\cdots k.
\end{eqnarray*}%
Making the change of variable \ $%
x-z_{1}=y_{1},x-(z_{1}+z_{2})=y_{1}-z_{2}=y_{2},\cdots ,$\newline
$x-(z_{1}+\cdots z_{k})=y_{k-1}-z_{k}=y_{k},$ we deduce the bound 
\begin{eqnarray*}
P_{2}^{(k)} &\leq &cn^{k}\sum_{y_{1}=C_{1}a_{n}}^{(1-\gamma )x}\cdots
\sum_{y_{k}=C_{k}a_{n}}^{(1-\gamma )y_{k-1}}p(x-y_{1})\cdots
p(y_{k-1}-y_{k})P(S_{n-k}=y_{k}) \\
&\leq &\frac{cn^{k+1}}{a_{n}}\tsum\nolimits_{k}p(x-y_{1})\cdots
p(y_{k-1}-y_{k})\overline{F}(y_{k}),
\end{eqnarray*}%
where we use (\ref{l2}) and write $\tsum\nolimits_{k}$ as an abbreviation
for the previous sum. (Note we have omitted the requirement $z_{1}\geq
z_{2}\geq \ldots \geq z_{k\text{ }}$ here.) Next, assume $a\neq 1/(k+1),$ so
that $yA(y)^{-r}$ is asymptotically increasing for $r\leq k+1$ and we can
use (\ref{r18}) to get 
\begin{eqnarray*}
\sum_{y_{k}=C_{k}a_{n}}^{(1-\gamma )y_{k-1}}p(y_{k-1}-y_{k})\overline{F}%
(y_{k}) &\leq &c\sum_{y_{k}=C_{k}a_{n}}^{(1-\gamma
)y_{k-1}}p(y_{k-1}-y_{k})A(y_{k})^{2}y_{k}^{-1}\cdot y_{k}A(y_{k})^{-3} \\
&\leq &cy_{k-1}A(y_{k-1})^{-3}A(y_{k-1})/y_{k-1}=cA(y_{k-1})^{-2},
\end{eqnarray*}%
We can then repeat the argument until we get 
\begin{equation}
P_{2}^{(k)}\leq \frac{cn^{k+1}}{a_{n}}\sum_{y_{1}=C_{1}a_{n}}^{(1-\gamma
)x}p(x-y_{1})A(y_{1})^{-k}.  \label{r23}
\end{equation}%
In the case $\alpha =1/(k+1)$ the last step in this procedure requires more
care, since $A(y)^{1-k}=y^{-1}A(y)^{2}\cdot yA(y)^{-k-1}$ and the last
factor here is slowly varying, and not necessarily increasing. But we do have%
\begin{eqnarray}
P_{2}^{(k)} &\leq &\frac{cn^{k+1}}{a_{n}}\sum_{y_{1}=C_{1}a_{n}}^{(1-\gamma
)x}\sum_{y_{2}=C_{2}a_{n}}^{(1-\gamma
)y_{1}}p(x-y_{1})p(y_{1}-y_{2})A(y_{1})^{1-k}  \notag \\
&\leq &\frac{cn^{k+1}}{a_{n}\sqrt{A(C_{2}a_{n})}}%
\sum_{y_{1}=C_{1}a_{n}}^{(1-\gamma )x}\sum_{y_{2}=C_{2}a_{n}}^{(1-\gamma
)y_{1}}p(x-y_{1})p(y_{1}-y_{2})A(y_{1})^{3/2-k}  \notag \\
&\leq &\frac{cn^{k+1/2}}{a_{n}}\sum_{y_{1}=C_{1}a_{n}}^{(1-\gamma
)x}p(x-y_{1})A(y_{1})^{1/2-k},  \label{r24}
\end{eqnarray}%
where we have used the fact that $yA(y)^{-(k+1/2)\text{ }}$is asymptotically
increasing. Furthermore, since $(k+2)\alpha >1,$ we deduce that when (\ref%
{r23}) holds, for any $\delta _{0}\in (C\delta _{x}^{\ast },1-\gamma ),$%
\begin{eqnarray*}
\sum_{n\in (n_{0},\delta A(x)]}P_{2}^{(k)} &\leq &c\sum_{n\in (n_{0},\delta
A(x)]}\sum_{C_{1}a_{n}}^{(1-\gamma )x}\frac{n^{k+1}}{a_{n}}p(x-y)A(y)^{-k} \\
&=&c\sum_{y=C_{1}a_{n_{0}}}^{(1-\gamma
)x}p(x-y)A(y)^{-k}\sum_{n_{0}}^{A(y/C_{1})\wedge \delta A(x)}\frac{n^{k+1}}{%
a_{n}} \\
&\leq &c\sum_{C_{1}a_{n_{0}}}^{(1-\gamma )x}p(x-y)A(y)^{-k}\frac{%
A(C_{1}\delta _{x}^{\ast }x\wedge y)^{k+2}}{C_{1}\delta _{x}^{\ast }x\wedge y%
} \\
&\leq &c\tilde{I}(\delta _{0}x)+\frac{c\delta ^{k+2}A(x)^{k+2}}{\delta
_{x}^{\ast }x}\sum_{\delta _{0}x}^{(1-\gamma )x}p(x-y)A(y)^{-k}.
\end{eqnarray*}%
Since now $yA(y)^{-k-2}$ is asymptotically decreasing we see that the second
term is bounded asymptotically by $c\delta ^{k+2-\eta },$ and so is
asymptotically neglible. If instead we have (\ref{r24}) a slight variation
of this argument gives the same conclusion$.$

Next, we consider $P(S_{n}=x,B_{k}^{c}),$ which we can bound above by $%
\sum_{2}^{k}P_{1}^{(j)}+\sum_{2}^{k}P_{3}^{(j)},$where%
\begin{eqnarray*}
P_{1}^{(j)} &=&P(S_{n}=x,B_{j-1},A_{1}^{(j)}),\text{ with }%
A_{1}^{(j)}=(Y_{j}\leq \gamma (x-Z_{n}^{(j-1)}))\text{, } \\
\text{and }P_{3}^{(j)} &=&P(S_{n}=x,B_{j-1},A_{3}^{(j)}),\text{ with }%
A_{3}^{(j)}=(Z_{n}^{(j)}\in \lbrack x-C_{j}a_{n},x]).\text{ }
\end{eqnarray*}%
Proceeding as above, and using (\ref{l1}), we get the bound%
\begin{eqnarray*}
P_{1}^{(j)}\leq cn^{j-1} &&\tsum\nolimits_{j-1}p(x-y_{1})\cdots
p(y_{j-1}-y_{j-2})P(S_{n-j}=y_{j-1},Z_{n-j}^{(1)}\leq \gamma y_{j-1}) \\
\leq \frac{cn^{j-1+1/\gamma }}{a_{n}} &&\tsum%
\nolimits_{j-1}p(x-y_{1})p(y_{1}-y_{2})\cdots
p(y_{j-2}-y_{j-1})A(y_{j-1})^{-1/\gamma }.
\end{eqnarray*}%
Now 
\begin{eqnarray*}
&&\sum_{y_{j-1}=C_{j-1}a_{n}}^{(1-\gamma
)y_{j-2}}p(y_{j-2}-y_{j-1})A(y_{j-1})^{-1/\gamma } \\
&\leq &c\sum_{y_{j-1}=C_{j-1}a_{n}}^{(1-\gamma
)y_{j-2}}p(y_{j-2}-y_{j-1})A(y_{j-1})^{2}y_{j-1}^{-1}\cdot
y_{j-1}A(y_{j-1})^{-2-1/\gamma } \\
&\leq &cA(y_{k-2})^{-1-1/\gamma },
\end{eqnarray*}%
where we have used (\ref{r18}), which we can do since $1+1/\gamma >\eta .$
Then we can repeat the argument, finally getting%
\begin{eqnarray*}
P_{j}^{(1)} &\leq &\frac{cn^{j-1+1/\gamma }}{a_{n}}%
\sum_{y_{1}=C_{1}a_{n}}^{(1-\gamma )x}p(x-y_{1})A(y_{1})^{-j+2-1/\gamma } \\
&\leq &\frac{cn^{j-1+1/\gamma }}{a_{n}A(x)^{j-1+1/\gamma }},
\end{eqnarray*}%
which gives%
\begin{eqnarray*}
\sum_{n\in (n_{0},\delta A(x)]}P_{j}^{(1)} &\leq &\frac{c}{%
A(x)^{j-1+1/\gamma }}\sum_{n\in (n_{0},\delta A(x)]}\frac{n^{j-1+1/\gamma }}{%
a_{n}} \\
&\leq &\frac{c(\delta A(x))^{j+1/\gamma }}{A(x)^{j-1+1/\gamma }\delta
_{x}^{\ast }x}\leq c\frac{\delta ^{j+1/\gamma }A(x)}{\delta _{x}^{\ast }x}.
\end{eqnarray*}%
Thus the term $P_{j}^{(1)}$ is asymptotically neglible for $2\leq j\leq k$
and $\alpha \leq 1/(k+1).$

Next,%
\begin{eqnarray*}
\sum_{n\in (n_{0},\delta A(x)]}P_{j}^{(3)} &\leq
&c\tsum\nolimits_{j-1}p(x-y_{1})\cdots p(y_{j-2}-y_{j-1})\Theta (y_{j-1}), \\
\text{with }\Theta (y_{j-1}) &=&\sum_{n_{0}}^{\delta A(x)\wedge
A(y_{j-1}/C_{j-1})}\frac{n^{j}}{a_{n}}\sum_{0\leq z\leq
C_{j}a_{n}}p(y_{j-1}-z) \\
&\leq &\sum_{0}^{\gamma _{j}y_{j-1}\wedge C_{j}\delta _{x}^{\ast
}x}p(y_{j-1}-z)\sum_{A(z/C_{j})}^{\delta A(x)\wedge A(y_{j-1}/C_{j})}\frac{%
n^{j}}{a_{n}},
\end{eqnarray*}%
where $\gamma _{j}=C_{j}/C_{j-1}<1.$ When $\alpha <1/(k+1)$ we have $j+1\leq
k+1<\eta ,$ so we can use the bound $\sum_{A(z/C_{j})}^{\infty }\frac{n^{j}}{%
a_{n}}\leq cA(z)^{j+1}/z$ to get%
\begin{eqnarray*}
\Theta (y_{j-1}) &\leq &c\sum_{0}^{\gamma _{j}y_{j-1}\wedge C_{j}\delta
_{x}^{\ast }x}p(y_{j-1}-z)A(z)^{j+1}/z \\
&\leq &cA(y_{j-1}\wedge \delta _{x}^{\ast }x)\sum_{z\leq \gamma
_{j}y_{j-1}}p(y_{j-1}-z)A(z)^{j}/z \\
&\leq &cA(y_{j-1}\wedge \delta _{x}^{\ast }x)\cdot \frac{A(y_{j-1})^{j-1}}{%
y_{j-1}}.
\end{eqnarray*}%
Repeating the process, we are finally left with the $y_{1}$ term, which is%
\begin{eqnarray*}
&&\sum_{y_{1}=C_{0}}^{(1-\gamma )x}p(x-y_{1})A(y_{1}\wedge \delta _{x}^{\ast
}x)\cdot \frac{A(y_{1})}{y_{1}} \\
&\leq &\sum_{y_{1}=C_{0}}^{\delta _{0}x}p(x-y_{1})\frac{A(y_{1})^{2}}{y_{1}}%
+A(C\delta _{x}^{\ast }x)\sum_{y_{1}=\delta _{0}x}^{(1-\gamma )x}p(x-y_{1})%
\frac{A(y_{1})}{y_{1}} \\
&\leq &\tilde{I}(\delta _{0}x)+\frac{A(C\delta _{x}^{\ast }x)}{A(\delta
_{0}x)}\sum_{y_{1}=\delta _{0}x}^{(1-\gamma )x}p(x-y_{1})\frac{A(y_{1})^{2}}{%
y_{1}} \\
&\leq &\tilde{I}(\delta _{0}x)+c\delta \sum_{y_{1}=\delta _{0}x}^{(1-\gamma
)x}p(x-y_{1})\frac{A(y_{1})^{2}}{y_{1}}.
\end{eqnarray*}%
Using (\ref{r18}), we see that $P_{3}^{(j)}$ is asymptotically neglible for $%
2\leq j\leq k.$ If $\alpha =1/(k+1)$ and $j<k$ the same argument works, so
we are left with the case $\alpha =1/(k+1)$ and $j=k,$ which is similar to
the case $\alpha =1/2.$ So again we split $P(S_{n-k}=y_{k-1},Z_{n-k}^{(1)}%
\geq y_{k-1}-C_{k}a_{n})$ into two terms, and estimate $%
P(S_{n-k}=y_{k-1},Z_{n-k}^{(1)}\geq y_{k-1}-\lambda a_{n})$ as before. We
need to deal with the terms%
\begin{eqnarray}
n^{k}\tsum\nolimits_{k-1}p(x-y_{1})\cdots p(y_{k-2}-y_{k-1})\sum_{n\leq
z\leq \lambda a_{n}}z^{-1}e^{-cn/A(z)}p(y_{k-1}-z), &&  \label{r27} \\
\text{and }n^{k}e^{-cn}\tsum\nolimits_{k-1}p(x-y_{1})\cdots
p(y_{k-2}-y_{k-1})\sum_{1\leq z\leq n}p(y_{k-1}-z). &&  \label{r28}
\end{eqnarray}%
We have%
\begin{eqnarray*}
\sum_{n_{0}}^{\delta A(x)}\text{(\ref{r27})} &\leq
&c\sum_{y_{1}=0}^{(1-\gamma )x}\cdots \sum_{y_{k-1}=0}^{(1-\gamma
)y_{k-2}}p(x-y_{1})\cdots p(y_{k-2}-y_{k-1})\Omega (y_{k-1}), \\
\text{where }\Omega (y_{k-1}) &=&\sum_{n_{0}}^{\delta A(x)\wedge
A(y_{k-1}/C_{k-1})}\frac{n^{k}}{a_{n}}\sum_{n\leq z\leq \lambda
a_{n}}z^{-1}e^{-cn/A(z)}p(y_{k-1}-z) \\
&\leq &c\sum_{n_{0}}^{\gamma _{k}y_{k-1}\wedge C_{k}\delta _{x}^{\ast
}x}p(y_{k-1}-z)\sum_{A(z/\lambda )}^{\infty }n^{k}e^{-cn/A(z)} \\
&\leq &c\sum_{n_{0}}^{\gamma _{k}y_{k-1}\wedge C_{k}\delta _{x}^{\ast
}x}p(y_{k-1}-z)\frac{A(z)^{k+1}}{z}.
\end{eqnarray*}%
By repeatedly using the calculation%
\begin{eqnarray*}
\sum_{0}^{\gamma _{k}y_{k-1}\wedge C_{k}\delta _{x}^{\ast }x}p(y_{k-1}-z)%
\frac{A(z)^{k+1}}{z} &\leq &A(\gamma _{k}y_{k-1}\wedge C_{k}\delta
_{x}^{\ast }x)\sum_{0}^{\gamma _{k}y_{k-1}}p(y_{k-1}-z)\frac{A(z)^{k}}{z} \\
&\leq &cA(y_{k-1}\wedge \delta _{x}^{\ast }x)\frac{A(y_{k-1})^{k-1}}{y_{k-1}}%
,
\end{eqnarray*}%
we deduce that%
\begin{equation*}
\sum_{n_{0}}^{\delta A(x)}\text{(\ref{r27})}\leq c\sum_{y_{1}=0}^{(1-\gamma
)x}p(x-y_{1})\frac{A(y_{1})A(y_{1}\wedge \delta _{x}^{\ast }x)}{y_{1}}.
\end{equation*}%
This deals with (\ref{r27}), and to bound (\ref{r28} ) we use the fact that $%
n=o(a_{n})$ to see that (\ref{r1}) implies that for sufficiently large $n$
we have 
\begin{eqnarray*}
\sum_{1\leq z<n}p(y_{k-1}-z) &\leq &\frac{cnA(y_{k-1})}{y_{k-1}} \\
&=&\frac{cA(y_{k-1})^{2}}{y_{k-1}}\cdot \frac{n}{A(y_{k-1})} \\
&\leq &\frac{cA(y_{k-1})^{2}}{y_{k-1}}\text{ for }y_{k-1}\geq C_{k-1}a_{n},
\end{eqnarray*}%
so that 
\begin{eqnarray*}
&&\sum_{y_{k-1}=C_{k-1}a_{n}}^{(1-\gamma
)y_{k-2}}p(y_{k-2}-y_{k-1})\sum_{1\leq z<n}p(y_{k-1}-z) \\
&\leq &c\sum_{y_{k-1}=C_{k-1}a_{n}}^{(1-\gamma )y_{k-2}}p(y_{k-2}-y_{k-1})%
\frac{A(y_{k-1})^{2}}{y_{k-1}} \\
&\leq &\frac{cA(y_{k-2})}{y_{k-2}}\leq \frac{cA(y_{k-2})^{2}}{ny_{k-2}}\text{
for }y_{k-2}\geq C_{k-2}a_{n}.
\end{eqnarray*}%
Repeating this we deduce that, for any $\varepsilon >0,$%
\begin{eqnarray*}
\sum_{n_{0}}^{\delta A(x)}\text{(\ref{r28})} &\leq &c\sum_{n_{0}}^{\delta
A(x)}ne^{-cn}\sum_{y_{1}=C_{1}a_{n}}^{(1-\gamma )x}p(x-y_{1})\frac{%
A(y_{1})^{2}}{y_{1}} \\
&\leq &\frac{cA(x)}{x}\sum_{n_{0}}^{\infty }ne^{-cn}\leq \frac{\varepsilon
A(x)}{x},
\end{eqnarray*}%
provided $n_{0}$ is chosen sufficiently large. This concludes the proof.
\end{proof}

\section{Extensions of the renewal process results}

\begin{enumerate}
\item As previously remarked, in the non-lattice case the obvious analogue
of Theorem \ref{srt} holds. By this we mean that the NASC for (\ref{r2}) to
hold with $g(x)$ replaced by $G(x,\Delta ):=\sum_{1}^{\infty }P(S_{n}\in
(x,x+\Delta ])$ and $g(\alpha ,\rho )$ replaced by $\Delta g(\alpha ,\rho )$
for any fixed $\Delta >0$ is that both%
\begin{equation*}
\lim_{x\rightarrow \infty }x\overline{F}(x)P(S_{1}\in (x,x+\Delta ])=0,
\end{equation*}%
and 
\begin{equation*}
\lim_{\delta \rightarrow 0}\lim \sup_{x\rightarrow \infty }x\overline{F}%
(x)\int_{1}^{\delta x}\frac{F(x-dw)}{w\overline{F}(w)^{2}}=0.
\end{equation*}

\item Similarly, whenever $F$ has a density and a density version of the
local limit therem holds for $S_{n}$ a density version of Theorem \ref{srt}
can be proved in the same manner.

\item In \cite{D}, Theorem 3 contains an extension of the SRT to generalized
Green's functions of the form%
\begin{equation*}
g_{b}(x)=\sum_{0}^{\infty }b_{n}P(S_{n}=x),
\end{equation*}%
where $b$ is a non-negative function which is regularly varying at $\infty $
of index $\beta .$ That result was obtained under the restriction 
\begin{equation}
\sup_{x\geq 1}\omega (x)<\infty ,\text{ where }\omega (x):=xp(x)/\overline{F}%
(x).  \label{g4}
\end{equation}
Here we show that (\ref{g4}) is redundant, by giving a NASC for the same
result.
\end{enumerate}

\begin{theorem}
\label{Gen}i) Assume that $F$ is aperiodic, $P(X\geq 0)=1,$ $S\in D(\alpha
,1)$ with $\alpha \in (0,1)$ and $\beta >-2.$ Put $b(A(\cdot ))=B(\cdot ).$
Then when $\alpha (2+\beta )>1$ 
\begin{equation}
\lim_{x\rightarrow \infty }\frac{x\overline{F}(x)g_{b}(x)}{B(x)}=g(\alpha
,,\beta ):=\alpha E(Y^{-\alpha (\beta +1)}),  \label{g1}
\end{equation}%
where $Y$ denotes a random variable having the limiting stable law. When $%
\alpha (2+\beta )\leq 1$ (\ref{g1}) holds if and only if, for every fixed $%
n_{0}$%
\begin{equation}
\lim_{x\rightarrow \infty }\frac{x\overline{F}(x)p(x)}{B(x)}=0,  \label{g2}
\end{equation}%
and 
\begin{equation}
\lim_{\delta \rightarrow 0}\lim \sup_{x\rightarrow \infty }\frac{x\overline{F%
}(x)}{B(x)}\sum_{1}^{\delta x}\frac{p(x-w)B(w)}{w\overline{F}(w)^{2}}=0.
\label{g3}
\end{equation}
\end{theorem}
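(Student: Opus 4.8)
The plan is to follow the architecture of the proof of Theorem \ref{srt} almost verbatim, tracking how the regularly varying weight $b_n$ propagates through each estimate. As there, the first step is to dispose of the large-$n$ range by Gnedenko's local limit theorem: for $n>\delta A(x)$ one replaces $P(S_n=x)$ by $a_n^{-1}f(x/a_n)$, turns the resulting sum into a Riemann sum, and substitutes $y=x/a_n$, $n=A(x/y)$, $b_n=b(A(x/y))=B(x/y)$. Since $B\in RV(\alpha\beta)$ and $A\in RV(\alpha)$, Potter's bounds and dominated convergence give
\[
\lim_{\delta\to 0}\lim_{x\to\infty}\frac{x}{A(x)B(x)}\sum_{n>\delta A(x)}b_nP(S_n=x)=\alpha\int_0^\infty y^{-\alpha(\beta+1)}f(y)\,dy=g(\alpha,\beta),
\]
the integral converging at $\infty$ precisely because $\beta>-2$ (and trivially at $0$, since $f$ decays faster than any power there). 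Hence (\ref{g1}) is equivalent to the asymptotic negligibility, in the sense of the proof of Theorem \ref{srt}, of $b_nP(S_n=x)$ on the range $n\in(n_0,\delta A(x)]$; in particular the $n=1$ term $b_1p(x)$ is then forced to be negligible, which is precisely condition (\ref{g2}).

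For the dichotomy: when $\alpha(2+\beta)>1$ the crude bound (\ref{l2}) already suffices, since then
\[
\frac{x}{A(x)B(x)}\sum_{n_0}^{\delta A(x)}b_nP(S_n=x)\le\frac{cx\overline{F}(x)}{A(x)B(x)}\sum_{n_0}^{\delta A(x)}\frac{b_n n}{a_n},
\]
and $b_n n/a_n\in RV(\beta+1-1/\alpha)$ has index $>-1$, so this sum is $\asymp(\delta A(x))^2b(\delta A(x))/a(\delta A(x))$ and the whole expression is $O(\delta^{\,2+\beta-1/\alpha})\to0$; with the first step this gives (\ref{g1}). When $\alpha(2+\beta)\le1$, necessity of (\ref{g3}) is obtained as for (\ref{r3}): one lower-bounds the contribution of configurations with a single dominant jump landing in $[\lambda a_n,Ca_n]$, multiplies by $b_n$, and reorganises the double sum. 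The point is that $\sum_{n=A(z/C)}^{A(z/\lambda)}b_n n/a_n$ is now comparable to $B(z)A(z)^2/z$ (because $b_n n/a_n\in RV(\le-1)$ here, the logarithmic boundary case $\alpha(2+\beta)=1$ being handled by an integral comparison exactly as the $\alpha=1/2$ case was), so the bound reproduces a multiple of the weighted quantity $\tilde{I}_B(\delta,x):=\sum_{w\le\delta x}p(x-w)B(w)A(w)^2/w$, whose negligibility is, after rewriting $\overline{F}$ in terms of $A$, precisely (\ref{g3}).

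For sufficiency (still $\alpha(2+\beta)\le1$, assuming (\ref{g2}) and (\ref{g3})) I would run the same decomposition of $P(S_n=x)$ by the ordered jumps $Z_n^{(1)}\ge Z_n^{(2)}\ge\cdots$: split off the largest jump into the ranges $\le\gamma x$ (bound by (\ref{l1}), with $\gamma$ chosen small enough that $b_n n^{1/\gamma}/a_n$ has $RV$-index $>-1$), $[x-Ca_n,x]$ (the dominant-jump term, which reproduces $\tilde{I}_B$ up to errors controlled by (\ref{g2})), and the intermediate range (bound by (\ref{l2}), then peel off the next jump, and so on). Every occurrence of the telescoping kernel $A(w)^2/w$ becomes $B(w)A(w)^2/w$; the monotonicity inputs ``$wA(w)^{-r}$ asymptotically increasing/decreasing'' become ``$wB(w)^{-1}A(w)^{-r}$ asymptotically increasing/decreasing'', and (\ref{r18}) is used in its weighted form; and the number $k$ of jumps one must peel off in the $P_2^{(k)}$ estimate is governed by $1/(\beta+k+2)<\alpha\le1/(\beta+k+1)$ — the first inequality for the $n$-sum $\sum_{n\le m}b_n n^{k+1}/a_n\asymp m^{k+2}b_m/a_m$, the second for the spatial peeling — and these intervals exhaust $(0,1/(\beta+2)]$ as $k=1,2,\dots$ since $\beta>-2$. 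The main obstacle is bookkeeping: one must verify that all the regular-variation indices required to exceed (or to be exceeded by) $-1$ still do so after the uniform shift by $\beta$ — they do, because every sub-case hypothesis is calibrated to a combination $\alpha(\beta+\text{integer})$ rather than to $\alpha$ alone — and one must re-run the delicate boundary sub-cases $\alpha(\beta+k+1)=1$, where an exponent becomes exactly slowly varying, with the same $\sqrt{A(\,\cdot\,)}$ perturbation device and the inequality $n=o(a_n)$ used at the corresponding place in the proof of Theorem \ref{srt}.
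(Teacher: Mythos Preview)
Your proposal is correct and follows essentially the same approach as the paper. The paper's own proof is only a two-sentence sketch: it invokes Theorem~3 of \cite{D} for the large-$n$ range and then says that the small-$n$ range ``essentially amounts to repeating the proof of (\ref{r5}) with the difference that wherever we dealt with sums involving $n^{j}/a_{n}$ we now need to deal with $n^{j}b(n)/a_{n}$; so the difficulties associated with integer values of $\eta$ become associated with integer values of $\eta -\beta$'' --- which is exactly the shift $\alpha(k+1)=1\mapsto\alpha(\beta+k+1)=1$ and the replacement of $A(w)^{2}/w$ by $B(w)A(w)^{2}/w$ that you spell out; your write-up is simply a more explicit version of the same argument.
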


\begin{proof}
A careful reading of the proof of Theorem 3 in \cite{D} shows that the only
way (\ref{g4}) is used is in establishing%
\begin{equation*}
\lim_{\delta \rightarrow 0}\lim \sup_{x\rightarrow \infty }\frac{x}{A(x)B(x)}%
\sum_{n\in (n_{0},\delta A(x)]}b_{n}P(S_{n}=x)=0,
\end{equation*}%
which is the analogue of (\ref{r5}). The proof of this, without (\ref{g4}),
essentially amounts to repeating the proof of (\ref{r5}) with the difference
that wherever we dealt with sums involving $n^{j}/a_{n}$ we now need to deal
with $n^{j}b(n)/a_{n};$ so the difficulties associated with integer values
of $\eta $ become associated with integer values of $\eta -\beta .$ The
details are omitted.
\end{proof}

\begin{remark}
It should also be mentioned that the restriction on the value of $\beta $ is
necessary for $g(\alpha ,\beta )$ to be finite.
\end{remark}

\begin{remark}
It should be noted that when (\ref{g4}) holds, both (\ref{g2}) and (\ref{g3}%
) are automatic.
\end{remark}

\section{Random walks}

An obvious question is whether the result for renewal processes in part (ii)
of Theorem \ref{srt} extends to the random walk case. We make the following

\begin{conjecture}
The conditions (\ref{rz}) and (\ref{r3}) of Theorem \ref{srt} are necessary
and sufficient for the SRT (\ref{r2}) to hold for any aperiodic random walk
in $D(\alpha ,\rho )$ with $\alpha \in (0,1)$ and $\rho >0.$
\end{conjecture}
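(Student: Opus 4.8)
The plan is to run the argument of the proof of Theorem \ref{srt} in the two‑sided setting, isolating the genuinely new ingredient. First, for $\alpha\in(1/2,1)$ the conjecture is already a theorem: part (i) of Theorem \ref{srt} (whose statement and proof do not use $P(X\ge0)=1$, only $\rho>0$ and (\ref{l2}), both available for random walks) gives the SRT, while the Remark following it shows that (\ref{rz}) and (\ref{r3}) then hold automatically. So the real content is $\alpha\in(0,1/2]$. For this range I would keep the two main structural steps of the proof of Theorem \ref{srt}. The reduction ``(\ref{r2}) $\iff$ (\ref{r5})'' rests only on Gnedenko's local limit theorem, Potter's bounds and the positivity of $\rho$ (which makes the limit law $Y$ put positive mass on $(0,\infty)$ and forces $0<\alpha E(Y^{-\alpha}:Y>0)<\infty$); it carries over verbatim. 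Necessity of (\ref{rz}) (equivalently (\ref{r1})) is immediate --- each fixed term $P(S_n=x)$, e.g. $P(S_1=x)=p(x)$, must be $o(A(x)/x)$ --- and necessity of (\ref{r3}) follows from the same single‑big‑positive‑jump lower bound $P^{\ast}\ge \frac{cn}{a_n}\sum_{\lambda a_n}^{Ca_n}p(x-z)$ as in the proof of Theorem \ref{srt}: here the residual walk of $n-1$ steps (with no step exceeding $x-z$) is merely required to hit a value $z$ of order $a_n$, so Gnedenko's theorem gives the needed $\asymp a_n^{-1}$ lower bound irrespective of negative steps.

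The work is in sufficiency: assuming (\ref{rz}) and (\ref{r3}), one must show $\sum_{n\in(n_0,\delta A(x)]}P(S_n=x)$ is asymptotically negligible. I would decompose $P(S_n=x)$ according to the largest positive step $Z_n^{(1)}=\max_{r\le n}X_r$ and peel off successive large positive jumps exactly via the events $B_k$ of the proof of Theorem \ref{srt}. The term $P_1^{(1)}=P(S_n=x,Z_n^{(1)}\le\gamma x)$ is handled directly by (\ref{l1}) of Theorem \ref{LOCAL}, which is already proved for arbitrary random walks in $D(\alpha,\rho)$, $\rho>0$, so that part and its summation over $n$ are unchanged. After removing the $j$ largest positive steps one is left with a partial sum $S_{n-j}=y_j$ of the \emph{remaining} steps, which in the renewal case was non‑negative but is now a genuine two‑sided walk; the renewal‑case proof used here the bounds of Lemma 4 of \cite{D} ($zP(S_{n-1}=z)\le ce^{-cn/A(z)}$ for $z/a_n\le\lambda$, $P(S_{n-1}=z)\le ce^{-cn}$ for $z\le n$), which must be replaced by two‑sided analogues. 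These are obtained by the very tilting‑plus‑normal‑approximation method of Theorem \ref{LOCAL} (tilting with $\mu>0$ or $\mu<0$ according to the sign of the target, with the roles of $\overline{F}$ and $F(-\cdot)$ interchanged; Lemma 3 of \cite{D} is already stated for two‑sided walks), and one also needs the two‑sided local large deviation bound $P(S_n=x)\le C_0 n(\overline{F}(|x|)+F(-|x|))/a_n$ for $|x|\ge n\theta_0$, again an immediate extension of Theorem \ref{LOCAL}. Granting these, every estimate in the sufficiency argument that involves only the regularly varying quantities $A,a_n$ and right‑tail sums $\sum p(x-y)\cdots$ --- the applications of (\ref{r18}), the case distinctions at integer values of $\eta$ handled by the $\sqrt{A}$‑trick, the final geometric‑series bounds --- goes through without change.

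The genuinely new difficulty, which I expect to be the main obstacle, is the contribution of configurations in which, after the big positive jumps are peeled off, the residual walk must hit a large \emph{negative} value --- equivalently, configurations with big positive jumps overshooting $x$ that are compensated by a big negative jump. (When $\rho=1$ the left tail is $o(\overline{F})$ and these configurations are dominated by the previous terms, so the renewal‑case proof essentially applies; the issue is $\rho\in(0,1)$.) Crude tail‑sum bounds are too lossy: modelling such a configuration as a positive step $\backsim x+w$ and a negative step $\backsim-w$ and bounding the rest by $c/a_n$ produces, after summing over $n\le\delta A(x)$, a term of order $\delta^{3-\eta}A(x)^{2}/x$, which fails since $A(x)\to\infty$. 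The plan is instead to close a recursion: bound $P(S_n=x)$ by peeling one jump at a time and at each stage control the hit of the residual walk by the two‑sided local large deviation bound above, so that overshooting by an amount $v$ costs a further factor $n(\overline{F}(v)+F(-v))/a_n$ and the peeling of negative jumps is the mirror image of the peeling of positive ones; using the comparability $\overline{F}(x)\asymp\overline{F}(x)+F(-x)$ (valid because $\rho>0$) and $\overline{F}(|x-y|)\asymp\overline{F}(x)$ for $|x-y|$ comparable to $x$, this reduces the mixed‑sign contributions once more to $\tilde I$‑type sums controlled by (\ref{r15}) plus remainders that vanish as $\delta\to0$. The delicate point --- and presumably the reason the statement is left as a conjecture here --- is to carry this out uniformly over the mixed‑sign configurations and over $\alpha\in(0,1/2]$, and in particular to show that the surviving $n>\delta A(x)$ part still produces the \emph{exact} constant $g(\alpha,\rho)$ rather than merely a bound of the right order. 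With that settled, combining the reduction to (\ref{r5}), the necessity of (\ref{rz}) and (\ref{r3}), and the sufficiency estimates yields the conjecture.
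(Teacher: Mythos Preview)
The statement is a \emph{conjecture} in the paper; there is no proof to compare against, only the Remark immediately following it, which sketches the case $\alpha>1/3$ and explicitly says that for $\alpha\in((k+2)^{-1},(k+1)^{-1}]$ with $k\ge2$ the argument ``seems to require consideration of the $k$ steps which are largest in modulus, and this seems quite complicated.'' Where the paper's Remark goes, your plan tracks it closely: the reduction to (\ref{r5}) via Gnedenko's theorem, the necessity of (\ref{rz}) and (\ref{r3}) from the single‑big‑jump lower bound, and the handling of $P_1^{(1)}$ via (\ref{l1}) are all exactly as in the paper and do not use $P(X\ge0)=1$. The paper's four‑way split for $\alpha>1/3$ isolates an extra piece $P_4^{(1)}=P(S_n=x,Z_n^{(1)}>x+Ca_n)$ and bounds the residual $S_{n-1}=x-z<-Ca_n$ using (\ref{l2}) for $-S$; this is precisely your ``overshoot compensated by a big negative jump,'' so for $\alpha>1/3$ your plan and the paper's Remark are essentially the same.

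Two comments on the remainder. First, your worry that the $n>\delta A(x)$ contribution must be shown to yield the \emph{exact} constant $g(\alpha,\rho)$ is misplaced: equation (\ref{r11}) is proved using only Gnedenko's local limit theorem, Potter's bounds and $\rho>0$, before any one‑sided hypothesis is invoked, so this part is already settled. Second, the genuine gap is the mixed‑sign estimate for small $\alpha$. Your scheme peels only the largest \emph{positive} jumps and proposes to control the resulting negative residuals by a two‑sided version of (\ref{l2}) together with a ``mirror image'' peeling of negative jumps; the paper instead proposes to peel from the start the $k$ jumps largest \emph{in modulus}. These are two organisations of the same combinatorics, and in either form the difficulty is the same: after $j$ positive and $j'$ negative big jumps the target for the residual walk lies in a two‑sided range, and one must show that summing the resulting iterated convolutions over all sign patterns still collapses to an $\tilde I$‑type bound plus terms that vanish with $\delta$. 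Neither the paper nor your proposal carries this out; your ``close a recursion'' paragraph is a plausible strategy but not yet an argument (in particular, the crude step you flag as ``too lossy'' remains the obstacle, and nothing in the proposal explains concretely how the extra factor $n(\overline{F}(v)+F(-v))/a_n$ compensates for it uniformly over sign patterns and over $\alpha\le1/2$). In short: your plan matches the paper's partial sketch where the paper gives one, differs only in bookkeeping (positive‑then‑negative peeling versus modulus peeling), and leaves open exactly the piece that the paper also declares open.
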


\begin{remark}
In principle, a variation of our method should establish this result. In
fact the proof of the necessity of the conditions is straightforward.
Likewise the proof of the sufficiency when $\alpha >1/3$ is not difficult.
Specifically we write $P(S_{n}=x)=\sum_{1}^{4}P_{r\text{ }}^{(1)},$ where $%
P_{1\text{ }}^{(1)}$ and $P_{2\text{ }}^{(1)}$ are as before, $P_{3\text{ }%
}^{(1)}=P(S_{n}=x,Z_{n}^{(1)}\in \lbrack x-Ca_{n},x+Ca_{n}]\},$ and $P_{4%
\text{ }}^{(1)}=P(S_{n}=x,Z_{n}^{(1)}>x+Ca_{n}).$ If we note that (\ref{r3})
is actually equivalent to 
\begin{equation*}
\lim_{\delta \rightarrow 0}\lim \sup_{x\rightarrow \infty }x\overline{F}%
(x)\sum_{-\delta x}^{\delta x}\frac{p(x-w)}{w\overline{F}(w)^{2}}=0,
\end{equation*}%
the estimate of $P_{3\text{ }}^{(1)}$ requires only minor changes. Finally a
similar argument, but using the result (\ref{l2}) for $-S,$ gives%
\begin{eqnarray*}
\sum_{n\in (n_{0},\delta A(x)]}P_{4}^{(1)} &\leq &c\sum_{Ca_{n_{0}}}^{\delta
_{0}x}p(x+z)F(-z)\frac{A(z)^{3}}{z} \\
&&+\frac{\delta ^{3}A(x)^{3}}{\delta ^{\ast }}\sum_{\delta _{0}x}^{\infty
}p(x+z)F(-z).
\end{eqnarray*}%
It then follows from a slight variation of the argument following (\ref{r10}%
) that%
\begin{equation*}
\lim_{\delta \rightarrow 0}\lim \sup_{x\rightarrow \infty }\frac{x}{A(x)}%
\sum_{n\in (n_{0},\delta A(x)]}P_{4}^{(1)}=0,
\end{equation*}%
which completes the proof. However a proof when $\alpha \in
((k+2)^{-1},(k+1)^{-1})$ for general $k$ seems to require consideration of
the $k$ steps which are largest in modulus, and this seems quite complicated.
\end{remark}

\section{Subordinators}

Our proof of Theorem \ref{srt} rests on the classical local limit theorems,
Theorem \ref{LOCAL}, and consideration of a finite number of the largest
jumps. It is not difficult to see that each of these items can be replicated
for an asymptotically stable subordinator, and then essentially the same
argument leads to the following result, whose proof is omitted.

\begin{theorem}
Let $X$ be any subordinator that is in the domain of attraction of a stable
law of index $\alpha \in (0,1)$ as $t\rightarrow \infty ,$ and define it's
renewal measure by 
\begin{equation*}
G(dx)=\int_{0}^{\infty }P(X_{t}\in dx)dt
\end{equation*}%
Suppose also that its L\'{e}vy measure is non-lattice, and write 
\begin{equation*}
G_{\Delta }(x)=G((x,x+\Delta ]).
\end{equation*}%
Then for any fixed $\Delta >0,$ (i) if $\alpha >1/2,$ 
\begin{equation}
\lim_{x\rightarrow \infty }x\overline{\Pi }(x)G_{\Delta }(x)=\Delta g(\alpha
,\rho )=\alpha \Delta E(Y^{-\alpha }),  \label{x1}
\end{equation}%
where $Y$ denotes a random variable having the limiting stable law. (ii) if $%
\alpha \in (0,1/2]$ then (\ref{x1}) holds if and only if%
\begin{equation}
\lim_{x\rightarrow \infty }x\overline{\Pi }(x)\Pi ((x,x+\Delta ])=0
\label{x2}
\end{equation}%
and 
\begin{equation}
\lim_{\delta \rightarrow 0}\lim \sup_{x\rightarrow \infty }x\overline{\Pi }%
(x)\int_{1}^{\delta x}\frac{\Pi (x-dw)}{w\overline{\Pi }(w)^{2}}=0.
\label{x3}
\end{equation}
\end{theorem}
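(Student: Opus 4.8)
The plan is to mirror the proof of Theorem \ref{srt}, with sums over $n$ replaced by integrals over $t$, the mass function $p$ replaced by the L\'evy measure $\Pi$, and $\overline{F}$ by $\overline{\Pi}$. First I would introduce $A(x)=x^{\alpha}L_{0}(x)\sim 1/\overline{\Pi}(x)$ with $L_{0}$ normalised slowly varying, take the norming function $a(\cdot)$ so that $A(a(t))=1$, and write $a_{t}=a(t)$, $\eta=1/\alpha$. The two ingredients carried over from the random-walk case are: (a) Stone's local limit theorem for subordinators, $a_{t}P(X_{t}\in(x,x+\Delta])\to\Delta f(x/a_{t})$ uniformly for $x/a_{t}$ in compact sets, $f$ being the density of $Y$; and (b) the analogue of Theorem \ref{LOCAL}: writing $X_{t}^{\ast}$ for the largest jump of $X$ on $[0,t]$, for every $\gamma>0$ there are $C_{0},t_{0},\theta_{0}$ with
\begin{equation*}
P\{X_{t}\in(x,x+\Delta],\,X_{t}^{\ast}\leq\gamma x\}\leq\frac{C_{0}\Delta\{t\overline{\Pi}(x)\}^{1/\gamma}}{a_{t}}\quad\text{and}\quad P\{X_{t}\in(x,x+\Delta]\}\leq\frac{C_{0}\Delta\,t\overline{\Pi}(x)}{a_{t}}
\end{equation*}
for $t\geq t_{0}$ and $x\geq t\theta_{0}$. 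The proof of (b) repeats the proof of Theorem \ref{LOCAL}: tilt by $\widetilde{\Pi}(dz)=e^{\mu z}\mathbf{1}_{(z\leq\gamma x)}\Pi(dz)$ with $\mu=(\log\Lambda)/\gamma x$ and $\Lambda=(t\overline{\Pi}(x))^{-1}$, note the tilted object is again a subordinator, control its Laplace exponent and the moments $\widetilde{m}_{k}$ exactly as in $(\ref{l5})$--$(\ref{l9})$, and apply a Gaussian local estimate (the continuous-time form of Lemma~3 of \cite{D}).

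With these in hand, write $G_{\Delta}(x)=\int_{0}^{\infty}P(X_{t}\in(x,x+\Delta])\,dt$ and split the range of integration at $t=\delta A(x)$. On $\{t>\delta A(x)\}$, the substitution $x/a_{t}=y$, i.e. $t=A(x/y)$, turns $\int_{t>\delta A(x)}\tfrac{\Delta f(x/a_{t})}{a_{t}}\,dt$ into a Riemann-type approximation to $\tfrac{\Delta\alpha A(x)}{x}\int_{0}^{\delta^{-\eta}}y^{-\alpha}f(y)\,dy$, exactly as in the derivation of $(\ref{r11})$, using $xA'(x)/A(x)\to\alpha$, Potter's bounds and dominated convergence; hence
\begin{equation*}
\lim_{\delta\to0}\lim_{x\to\infty}\frac{x}{A(x)}\int_{t>\delta A(x)}P(X_{t}\in(x,x+\Delta])\,dt=\Delta\alpha E(Y^{-\alpha})=\Delta g(\alpha,\rho).
\end{equation*}
The part $t\leq t_{0}$ is dominated by the single-jump term $\sim t\,\Pi((x,x+\Delta])$, so it is $o(A(x)/x)$ precisely when $(\ref{x2})$ holds, which simultaneously proves $(\ref{x2})$ necessary. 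Thus everything reduces to deciding when $\lim_{\delta\to0}\lim\sup_{x\to\infty}\tfrac{x}{A(x)}\int_{t_{0}}^{\delta A(x)}P(X_{t}\in(x,x+\Delta])\,dt=0$. For $\alpha>1/2$, $t^{2}/a_{t}$ is regularly varying of positive index, so the second bound in (b) makes this $O(\delta^{2}/\delta_{x}^{\ast})\to0$, giving part (i). For $\alpha\leq1/2$, necessity of $(\ref{x3})$ follows by bounding that integral below by the contribution of configurations in which the single largest jump lies in $[x-Ca_{t},x-\lambda a_{t}]$; integrating the prefactor $t/a_{t}$ over the corresponding $t$-range reproduces $\widetilde{I}(\lambda\delta_{x}^{\ast},x):=\int_{1}^{\lambda\delta_{x}^{\ast}x}\Pi(x-dw)A(w)^{2}/w$ up to an asymptotically negligible term (the case $\alpha=1/2$ requiring the same logarithmic computation as in the proof of Theorem \ref{srt}).

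For sufficiency when $\alpha\leq1/2$ I would decompose $P(X_{t}\in(x,x+\Delta])$ according to the ordered jumps $Z_{t}^{(1)}\geq Z_{t}^{(2)}\geq\cdots$ of $X$ on $[0,t]$, exactly as in the handling of $P_{1}^{(1)},P_{2}^{(1)},P_{3}^{(1)}$ and, for $\alpha\leq1/3$, of the events $B_{k}$ in the proof of Theorem \ref{srt}: conditionally on the $k$ largest jumps having sizes $z_{1}\geq\cdots\geq z_{k}$, the remaining process is a subordinator with L\'evy measure $\Pi$ restricted to $(0,z_{k})$, and one inserts $(\ref{l1})$, $(\ref{l2})$ and their subordinator analogues to peel the jumps off one at a time, each peeling producing a factor controlled by $(\ref{r18})$. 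The small-$t$ tail estimates used after $(\ref{r20})$ — bounds of the form $tP(X_{t}\in dz)\leq ce^{-ct/A(z)}$ for $z/a_{t}\leq\lambda$ and $P(X_{t}\in dz)\leq ce^{-ct}$ for $z\leq t$ — carry over from Lemma~4 of \cite{D}. The main obstacle is exactly the one present in Theorem \ref{srt}: the boundary values $\alpha=1/(k+1)$ (equivalently $\eta=k+1$), where $A(y)^{1-k}$ is no longer monotone and one must absorb a half-power of $A$ as in $(\ref{r24})$, together with the bookkeeping of which power of $A$ is asymptotically increasing or decreasing at each peeling step. I expect no genuinely new difficulty beyond the random-walk case — only the routine replacement of the $n$-step combinatorics by the Poissonian largest-jump structure of the subordinator — which is why the proof is omitted here.
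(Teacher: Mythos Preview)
Your proposal is correct and matches the paper's approach exactly: the paper omits the proof entirely, saying only that the three ingredients of the random-walk argument (the local limit theorem, Theorem \ref{LOCAL}, and the finite-largest-jump decomposition) can each be replicated for an asymptotically stable subordinator, after which ``essentially the same argument'' gives the result. Your outline of how these ingredients port over --- Stone's local limit theorem, the tilted subordinator for the local large deviation bound, and the Poissonian largest-jump peeling in place of the order statistics of the steps --- is precisely the translation the paper has in mind, and in fact supplies more detail than the paper does.
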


\begin{acknowledgement}
Almost simultaneously a different proof of the main result of this paper has
appeared in Caravenna, \cite{Car}.
\end{acknowledgement}

\end{document}